\newtheorem{theorem}{Theorem}
\newtheorem{proposition}[theorem]{Proposition}
\newtheorem{corollary}[theorem]{Corollary}
\newtheorem{lemma}[theorem]{Lemma}
\newtheorem{definition}[theorem]{Definition}
\newtheorem{assumption}[theorem]{Assumption}
\def\ba{\begin{array}}
\def\ea{\end{array}}
\def\bi{\begin{itemize}}
\def\ei{\end{itemize}}
\def\mR{\mathbb{R}}
\def\mZ{\mathbb{Z}}
\def\mE{\mathbb{E}}
\def\m1{1}
\def\eps{\varepsilon}
\def\cB{\mathcal{B}}
\def\cA{\mathcal{A}}
\def\b1{{\mathbf 1}}
\def\medno{}%\medskip\noindent}
\def\diag{\operatorname{diag}}
\DeclareMathOperator\supp{supp}
\begin{document}

\title{Computable convergence rate bound for ratio consensus algorithms}
%\date{\today}

\author{Bal\'azs Gerencs\'er%
  \thanks{B. Gerencs\'er is with the Alfr\'ed R\'enyi Institute of Mathematics, Budapest, Hungary and E\"otv\"os Lor\'and University, Department of Probability and Statistics, Budapest, Hungary, {\tt\small gerencser.balazs@renyi.hu} He was supported by NRDI (National Research, Development and Innovation Office) grant FK 135711 and KKP 137490, the J\'anos Bolyai Research Scholarship of the Hungarian Academy of Sciences and by the ÚNKP-20-5 New National Excellence Program of the Ministry for Innovation and Technology from the source of the NRDI Fund.}
}

\maketitle

  \begin{abstract}
    The objective of the paper is to establish a computable upper bound for the almost sure convergence rate for 
    a class of ratio consensus algorithms defined via column-stochastic matrices.
    Our result extends the works of Iutzeler et al.\ \cite{iutzeler2013analysis} on similar bounds that have been obtained in a more restrictive setup with limited conclusions. The present paper complements the results of Gerencsér and Gerencsér \cite{gerencsr2019tight}, identifying the exact almost sure convergence rate of a wide class of ratio consensus algorithms in terms of a spectral gap, which is, however, not computable in general. The upper bound provided in the paper will be compared to the actual rate of almost sure convergence experimentally on a range of modulated random geographic graphs with random local interactions. 
\end{abstract}

% MSC
% 37A25 Ergodicity, mixing, rates of mixing (Primary)
% 90B18   	Communication networks in operations research
% 68M10   	Network design and communication in computer systems
% 68W15   	Distributed algorithms
%

%Keywords
%push-sum algorithms
%ratio consensus
%weighted gossip algorithms
%spectral radius
%stability
%primitive sets of matrices

\section{Introduction}

\medno
\emph{Ratio consensus} algorithms were initially proposed in a special form by Kempe et al.\ \cite{kempe2003gossip} under the name \emph{push-sum}, with its scope being extended later in \cite{benezit2010weighted} under the name \emph{weighted gossip}. The basic setup of these algorithms is a directed graph or network with values associated to each node. The objective is the design of a communication protocol for the computation of the average of the initial input values given at the nodes, using only local, directed, possibly asynchronous communication. Ratio consensus algorithms became the building blocks of further methods requiring distributed computation, such as the analysis of  sensors networks \cite{giridhar2006toward}, the spectral analysis of a network \cite{kempe2008decentralized} or distributed optimization \cite{nedic2014distributed}, just to highlight a few.

\medno
For the sake of historical context note that ratio consensus is an extension of classic \emph{gossip algorithms} for average consensus, see \cite{tsitsiklis:phd1984}, \cite{blondel2005convergence}, in which the graph is not directed, and updating the values is restricted to a randomly chosen communicating pair of nodes, replacing their values by the average. Gossip algorithms are linear: updates are defined via (left-)multiplication by a \emph{doubly stochastic} random matrix. 

\medno
As soon as real-life communication conditions are included in the analysis, additional care is needed as is the case for packet loss \cite{frasca2013large} where the large size of the network allows controlling the error, or in case of delay \cite{olfati2004consensus}, where this delay needs to be bounded by spectral properties to achieve average consensus.

\medno
The exponential rate of convergence in mean square sense for gossip algorithms, with i.i.d. selection of communicating pairs, has been determined in \cite{boyd2006randomized}.
A significant advance, assuming strictly stationary edge selection was presented in \cite{picci2013almost} establishing almost sure (a.s.) exponential rate of convergence via a spectral gap in the context of Oseledec's multiplicative ergodic theorem. 
There is a vast literature for in-depth understanding of such algorithms, for a wider perspective including, e.g., distributed optimization and further references we refer to the survey \cite{MAL-051}.

\medno
Ratio consensus algorithms were designed for possibly asynchronous communication protocols on a directed graph, leading to updates defined via multiplication by a \emph{column stochastic} random matrix, which in itself would fail to reach average consensus. This shortcoming is compensated by running an additional process, allocating weights to each node, with initial weights equal to $1,$ and considering the quotients value/weight, which are then expected to converge to the required average value for all nodes.  

\medno
Almost sure convergence of ratio consensus algorithms has been established under a variety of settings
see \cite{kempe2003gossip} or \cite{benezit2010weighted}, even for the case of communication protocols with bounded communication delays \cite{hadjicostis2014average}. However the question on the exact rate of a.s.\ convergence, raised back in 2010, see  \cite{benezit2010weighted}, was open for a decade. 

\medno
A partial answer to the question on the a.s.\ rate of a ratio consensus algorithm was given in Iutzeler et al.\ \cite{iutzeler2013analysis},  providing an upper bound along an unspecified, infinite subset of the timeline. More recently, the paper of Gerencs\'er and Gerencs\'er \cite{gerencsr2019tight} identified the \emph{exact rate of a.s.\ convergence} as the spectral gap, in the context of Oseledec's multiplicative ergodic theorem, of the associated random matrix process under very general conditions. However, the spectral gap is known to be \emph{uncomputable} in general
\cite{tsitsiklis1997lyapunov}.

\medno
The purpose of this paper is to provide a \emph{computable} upper bound for the rate of a.s.\ convergence along the full timeline, under technical assumptions that are weaker than those of \cite{iutzeler2013analysis}. This result is obtained by combining arguments of \cite{iutzeler2013analysis}, which we simplify and extend with the results of \cite{gerencsr2019tight}. 
Apart from a technical tool borrowed from \cite{gerencsr2019tight} we provide a transparent and self-contained proof.

\section{Technical setup and main result}
\label{sec:prelim_0}

\medno
To describe the technical details in terms of algebraic operations let $p$ be the number of agents, or equivalently, the number of nodes of the communication graph. Let $x_0 \in \mathbf R^p$ be a column vector composed of the initial values associated with the nodes in some prefixed order. Our objective is to compute the average $\bar x = \sum_{i=1}^p x_0^i /p.$ Let $w_0=\b1 \in \mathbf R^p$ an auxiliary vector, the components of which are called weights. At any time $n \ge 1$ the transmission of an (identical)  fraction of values and weights results in updated value and weight vectors as follows:
\begin{equation}
\label{eq:Update_x_w}
x_{n}=A_{n}x_{n-1},\qquad
w_{n}=A_{n}w_{n-1}, 
\end{equation}
where $(A_n), n\ge 1$ is an i.i.d.\ sequence of non-negative column-stochastic matrices, implicitly representing all constraints imposed by the network and specifying the local, possibly asynchronous transmissions without packet loss.   

\medno
The average at agent $i$ at time $n$ is then estimated by the readout $x^i_n/w^i_n$. A simple interpretation of the algorithm is obtained by thinking of $x^i_n/w^i_n$, as a concentration of a substance in some solvent, properly re-scaled. Note that we can write $\bar x = \b1^\top x_0/p = \b1^\top x_0/(\b1^\top w_0)$. Observe also that 
$$
\b1^\top x_n=\b1^\top A_n A_{n-1}\cdots A_1 x_0=\b1^\top x_0
$$
since the matrices $A_k$ are column-stochastic. Thus the overall average of $x_n$ is conserved, similarly for $w_n,$ thus $\b1^\top w_n = p$ for all $n.$ The rate of a.s.\ convergence is defined as 
\begin{equation}
\limsup_{n\to\infty} \frac 1 n \log  \sum_{i=1}^p\bigg|\frac{x_n^i}{w_n^i} -\bar x\bigg|.
\end{equation}
A significant advance over previous works was the determination of a theoretical and tight upper bound for the rate of a.s. convergence under a variety of reasonable conditions, see Theorems 12-19 of \cite{gerencsr2019tight}, thus settling an open problem raised back in 2010, see \cite{benezit2010weighted}. 

\medno
In order to clarify the main result to be stated let us revisit the linear algebraic arguments of \cite{iutzeler2013analysis} in preparation for the analysis of $x^i_n/w^i_n - \bar x.$
Let $M_n = A_nA_{n-1}\cdots A_1$ denote the total effect of the updates on $x_0$ or $w_0$ until time $n$. Let $I$ denote a $p \times p$ identity matrix  and let $J=\b1\b1^\top/p$. Note that $x_n = M_n x_0$ can be decomposed as
\begin{align*}
  x_n &= M_nJ x_0 + M_n(I-J)x_0 = M_n\b1 \bar x + M_n(I-J)x_0\\
      &= w_n\bar x + M_n(I-J)x_0.
\end{align*}
Thus, at agent $i,$ the ratio consensus algorithm will yield
\begin{equation}
  \label{eq:x_i_per_w_i_via_M_N}
  \frac{x_n^i}{w_n^i} 
  = \bar x + \frac{e_i^{\top}M_n(I-J)x_0}{w_n^i},
\end{equation}
where $e_i$ is the unit vector with a single 1 at position $i$.

\medno
It follows that the error of ${x_n^i}/{w_n^i}$ is largely controlled by behavior of the  matrix $N_n := M_n(I-J)$. We can get a useful alternative expression by noting that $A_m$ being column-stochastic implies 
\begin{equation}
\label{eq:i-j}
(I-J)A_m(I-J)=A_m(I-J).  
\end{equation}
Applying this repeatedly for $N_n$ we get the expression
\begin{equation}
\label{eq:Express_N__n_as_Product}
N_n=(A_n(I-J)) \cdot (A_{n-1}(I-J))\cdots (A_1(I-J)).
\end{equation}

\begin{theorem}
  \label{thm:Main}
  Let Assumption \ref{assumption:typicalsetup} given below be satisfied. Then with $\eta_2 = \log\rho\left(\mE(A_{1}^{\otimes 2})(I-J)^{\otimes 2}\right)$ we have a.s.  
  \begin{align*}
    \limsup_{n\to\infty} \frac 1 n \log  \sum_{i=1}^p\bigg|\frac{x_n^i}{w_n^i} -\bar x\bigg| \le {\frac {\eta_2} 2} < 0.
  \end{align*}
\end{theorem}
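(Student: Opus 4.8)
The plan is to start from the exact identity \eqref{eq:x_i_per_w_i_via_M_N}. Writing $w_n^\ast:=\min_{1\le i\le p}w_n^i$ (positive under Assumption \ref{assumption:typicalsetup}) and $R_n:=\sum_{i=1}^p|e_i^\top N_n x_0|$, equation \eqref{eq:x_i_per_w_i_via_M_N} gives $\sum_{i}|x_n^i/w_n^i-\bar x|\le R_n/w_n^\ast$, so $\tfrac1n\log\sum_i|x_n^i/w_n^i-\bar x|\le\tfrac1n\log R_n-\tfrac1n\log w_n^\ast$. The theorem thus reduces to two statements: (i) $\limsup_n\tfrac1n\log R_n\le\eta_2/2$ almost surely, and (ii) $\liminf_n\tfrac1n\log w_n^\ast\ge 0$ almost surely. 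Statement (ii) is where I would invoke the technical tool imported from \cite{gerencsr2019tight}: under Assumption \ref{assumption:typicalsetup} the weights stay bounded away from $0$, a connectivity/ergodicity fact that is genuinely needed here (a crude per-step bound such as $w_n^i\ge\delta^n$ would only yield the too-weak $\liminf_n\tfrac1n\log w_n^\ast\ge\log\delta$).

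The substance is (i), and the engine is a second-moment estimate obtained by squaring $N_n$ in the Kronecker sense. Using the mixed-product rule $(BC)^{\otimes 2}=B^{\otimes 2}C^{\otimes 2}$, the factorisation \eqref{eq:Express_N__n_as_Product}, the identity $(A_m(I-J))^{\otimes 2}=A_m^{\otimes 2}(I-J)^{\otimes 2}$, and the i.i.d.\ property of $(A_m)$ (with $(I-J)^{\otimes 2}$ deterministic), one gets
\[
  \mE\big[N_n^{\otimes 2}\big]\;=\;Q^n,\qquad Q:=\mE\big[A_1^{\otimes 2}\big](I-J)^{\otimes 2}.
\]
Since $(e_i^\top N_n x_0)^2=(e_i\otimes e_i)^\top N_n^{\otimes 2}(x_0\otimes x_0)$ and $R_n^2\le p\sum_i(e_i^\top N_n x_0)^2$ by Cauchy--Schwarz, taking expectations yields
\[
  \mE\big[R_n^2\big]\;\le\;p\Big(\textstyle\sum_{i=1}^p e_i\otimes e_i\Big)^{\!\top}Q^n(x_0\otimes x_0)\;\le\;p^{3/2}\,\|x_0\|^2\,\|Q^n\|.
\]
By Gelfand's formula $\|Q^n\|^{1/n}\to\rho(Q)=e^{\eta_2}$, so for each $\eps>0$ there is $C_\eps<\infty$ with $\mE[R_n^2]\le C_\eps e^{n(\eta_2+\eps)}$. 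A standard Markov plus Borel--Cantelli step then finishes (i): taking $t_n:=e^{n(\eta_2+2\eps)/2}$ gives $\mP(R_n>t_n)\le C_\eps e^{-n\eps}$, which is summable, so a.s.\ $R_n\le t_n$ for all large $n$ and hence $\limsup_n\tfrac1n\log R_n\le(\eta_2+2\eps)/2$; intersecting over $\eps=1/k$, $k\to\infty$, gives $\limsup_n\tfrac1n\log R_n\le\eta_2/2$ a.s.

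Combining (i) and (ii) yields the bound $\le\eta_2/2$. The strict negativity $\eta_2<0$ is not produced by this argument and must be extracted from Assumption \ref{assumption:typicalsetup}: $\mE[A_1^{\otimes 2}]$ is a nonnegative column-stochastic matrix, so its spectral radius equals $1$ with Perron direction $\b1\otimes\b1$, whereas $(I-J)^{\otimes 2}$ annihilates $\b1\otimes\b1$; the assumption is tailored to rule out the degenerate, permutation-like regimes in which no mass is lost off the consensus subspace, so that the compressed operator $(I-J)^{\otimes 2}\mE[A_1^{\otimes 2}](I-J)^{\otimes 2}$ — which shares the nonzero spectrum of $Q$ — is a strict contraction, i.e.\ $\eta_2<0$ (equivalently, the mean-square convergence already available from \cite{gerencsr2019tight}). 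I expect the two genuinely delicate points to be exactly these: the uniform-in-time lower bound on the weights in (ii), which is why a tool from \cite{gerencsr2019tight} is borrowed rather than reproved, and the verification that Assumption \ref{assumption:typicalsetup} indeed forces $\eta_2<0$ by excluding those degenerate cases; the analytic core — the Kronecker identity for $\mE[N_n^{\otimes 2}]$ and the Gelfand bound — should be short and essentially forced.
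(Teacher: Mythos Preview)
Your proposal is correct and follows essentially the same route as the paper: the identity $\mE[N_n^{\otimes 2}]=Q^n$, Gelfand's formula, Markov/Borel--Cantelli for the numerator, and the sub-exponentiality of $1/\min_i w_n^i$ (the paper's Lemma~\ref{lem:w_subexp1}, drawn from \cite{gerencsr2019tight}) for the denominator. The one place where the paper is more concrete than your sketch is the strict negativity $\eta_2<0$: rather than compressing by $(I-J)^{\otimes 2}$, it inserts the larger projection $P_1=I^{\otimes 2}-J^{\otimes 2}$ (noting $P_1(I-J)^{\otimes 2}=(I-J)^{\otimes 2}$), so that $\rho(Q)\le\rho(\mE[A_1^{\otimes 2}]P_1)<1$ follows directly from Perron--Frobenius once $\mE[A_1^{\otimes 2}]$ is shown irreducible via Lemmas~\ref{lem:tensor_seq_prim} and~\ref{lem:EA_1_Irreducible}.
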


\medno
The technical results of the paper also complement previous results on the rate of convergence of linear gossip algorithms, defined via \emph{doubly-stochastic} matrices, which were available both in mean-squared and a.s. sense, as in \cite{boyd2006randomized}, \cite{picci2013almost}. For the current statement see Corollary \ref{cor:lin_moment}.

\medno
The enhance readability the necessary technical results of \cite{gerencsr2019tight} are restated under the specific conditions of the present paper collected in Assumption \ref{assumption:typicalsetup}.

\section{Sequential primitivity}
\label{sec:prelim}

In what follows we present the basic technical assumptions needed for the application of the results of \cite{gerencsr2019tight}. It is intuitively clear that in order to get convergence to the average at all nodes, we need to ensure all-to-all influence. Technically speaking, we should require that the matrix product $A_n\cdots A_1$ is positive for large enough, possibly random $n$. This leads to the following definition, in a more general and deterministic context, formulated in \cite{protasov2012sets} as follows:

\begin{definition}
  A set $\cA$ of $p\times p$ non-negative matrices is \emph{primitive} if a strictly positive product can be formed by some elements of it, repetitions allowed.
\end{definition}

\medno
Recall that a non-negative matrix is called \emph{allowable} if all rows and all columns contain at least one strictly positive element, see \cite{seneta2006non}. Now, if the matrices $A_n \in \cA$ are chosen according to some random process, we get a natural extension of the notion of primitivity:

\begin{definition}
  A strictly stationary process $(A_n), n\ge 1$ of $p\times p$ non-negative allowable matrices is \emph{sequentially primitive} if $A_\tau A_{\tau-1}\cdots A_1$ is strictly positive for a finite stopping time $\tau$.
\end{definition}

\medno
Sequential primitivity is easily established for an i.i.d.\ sequence of matrices
by the lemma below:
\begin{lemma}
\label{lem:SEQ_PRIM_IID}
  Consider a set of $p\times p$ non-negative matrices $\cal A$, such that all $A\in \cal A$ is allowable. Assume that $\cal A$ is primitive. Let $\mu$ be a fully supported distribution on $\cal A$, i.e., $\supp \mu = \cal A$. Consider the i.i.d.\ sequence $(A_n), n\ge 1$, distributed according to $\mu$. Then $(A_n)$ is sequentially primitive.
\end{lemma}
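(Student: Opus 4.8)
The plan is to exhibit, with positive probability at some fixed deterministic time, a strictly positive product, and then invoke the Borel–Cantelli lemma to convert this into an almost surely finite stopping time. First, since $\cal A$ is primitive, by definition there exist matrices $B_1,\dots,B_k\in\cal A$ (repetitions allowed) such that the product $B_k B_{k-1}\cdots B_1$ is strictly positive. Because $\supp\mu = \cal A$, every element of $\cal A$ receives positive $\mu$-mass on a neighborhood basis; more precisely, for each $j$ there is an open neighborhood $U_j$ of $B_j$ in $\cal A$ with $\mu(U_j)>0$, and since matrix multiplication is continuous and strict positivity of a product is an open condition, we may shrink the $U_j$ so that $C_k\cdots C_1 > 0$ (entrywise) whenever $C_j\in U_j$ for all $j$. (If $\cal A$ is finite, this is immediate: just take $U_j=\{B_j\}$.) Hence the event $E=\{A_k\in U_k,\ A_{k-1}\in U_{k-1},\ \dots,\ A_1\in U_1\}$ has probability $\prod_{j=1}^k \mu(U_j) > 0$ by independence, and on $E$ we have $A_k\cdots A_1 > 0$.

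Next I would upgrade this to an a.s.\ finite stopping time. Consider the disjoint blocks of indices $\{1,\dots,k\}$, $\{k+1,\dots,2k\}$, and so on, and for each $m\ge 0$ let $E_m$ be the event that $(A_{mk+1},\dots,A_{(m+1)k})$ lands in $(U_1,\dots,U_k)$. The $E_m$ are i.i.d.\ with $\mP(E_m)=\mP(E)>0$, so by the second Borel–Cantelli lemma infinitely many of them occur a.s.; in particular the first index $m^\star$ with $E_{m^\star}$ occurring is a.s.\ finite. On $E_{m^\star}$ the block product $A_{(m^\star+1)k}\cdots A_{m^\star k+1}$ is strictly positive. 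To turn this into strict positivity of the full product $A_\tau\cdots A_1$ I use allowability: each $A_n$ is allowable, and a product of allowable matrices is allowable, while left- or right-multiplying a strictly positive matrix by an allowable matrix keeps it strictly positive (a positive row times a column with a positive entry is positive, and symmetrically). Therefore $A_\tau A_{\tau-1}\cdots A_1 > 0$ with $\tau := (m^\star+1)k$, which is an a.s.\ finite stopping time (it is determined by finitely many $A_n$'s, so it is indeed a stopping time with respect to the natural filtration). Since $(A_n)$ is i.i.d.\ it is in particular strictly stationary and the matrices are allowable by hypothesis, so all requirements in the definition of sequential primitivity are met.

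The only slightly delicate point is the passage from "$\supp\mu=\cal A$" to "a product of matrices each drawn from a positive-measure set is strictly positive," which is where the topological/open-condition argument above is needed; everything else is a routine Borel–Cantelli plus the elementary bookkeeping with allowable matrices. I expect this step to be the main obstacle only in the sense of requiring care with the case where $\cal A$ is an infinite (e.g.\ uncountable) compact set of matrices; in the finite case — which is the typical setting for these consensus models — it is trivial.
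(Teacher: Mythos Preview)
Your proof is correct and follows essentially the same strategy as the paper: exhibit a strictly positive block product with positive probability via small neighborhoods of the witnessing matrices, apply Borel--Cantelli on i.i.d.\ blocks, and use allowability to absorb the prefix before the first successful block. The only cosmetic difference is that the paper routes the small-ball step through the finite space of $\{0,1\}$ sign patterns (via an entrywise positivity indicator $\gamma$) instead of your direct continuity-of-multiplication argument, but the underlying reasoning is the same.
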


\medno
The proof will be given in the Appendix. For the sake of historical perspective we note that the following simple alternative sufficient condition for sequential primitivity was given in \cite{benezit2010weighted}:
\begin{proposition}
  \label{lm:benezit_prim}
  Let $(A_n), n\ge 1$ be an i.i.d.\ sequence of $p\times p$ matrices. Assume that $A_1$ has a strictly positive diagonal almost surely, and $\mE A_1$ is irreducible.  Then $(A_n)$ is sequentially primitive.
\end{proposition}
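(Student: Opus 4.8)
\medskip
\noindent\emph{Proof sketch.}
Since $A_1$ has a strictly positive diagonal almost surely, every $A_n$ is allowable a.s., so the notion of sequential primitivity applies. The plan is to fix a deterministic window length $N$, exhibit an event of positive probability depending only on $A_1,\dots,A_N$ on which $A_N\cdots A_1$ is strictly positive, and then repeat this along independent blocks to obtain the required finite stopping time.

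The combinatorial input is the directed graph $G$ on $\{1,\dots,p\}$ having an arc $a\ra b$ whenever $\mP\big((A_1)_{ba}>0\big)>0$, equivalently whenever $\mE\big[(A_1)_{ba}\big]>0$. Irreducibility of $\mE A_1$ makes $G$ strongly connected, and the a.s.\ positive diagonal places a loop at every vertex, so every ordered pair $(i,j)$ is joined by a directed walk $j=v_0\ra v_1\ra\cdots\ra v_r=i$ in $G$ with $r\le p-1$. The loops let a space--time path of the random product wait at any vertex, so if every $A_t$ has a positive diagonal then $\big(A_N\cdots A_1\big)_{ij}>0$ as soon as there are times $t_1<\cdots<t_r$ with $\big(A_{t_s}\big)_{v_s v_{s-1}}>0$ for $s=1,\dots,r$.

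To realise all $p^2$ ordered pairs inside a single window I would enumerate them as $q=1,\dots,p^2$, reserve for the $q$-th pair the block of time indices $\{(q-1)(p-1)+1,\dots,q(p-1)\}$ (so $N:=p^2(p-1)$), fix a walk in $G$ for that pair as above, and demand along the first $r_q$ indices of its block that $A_t$ carry the successive arcs of that walk. Let $F$ be the intersection of all these demands together with the probability-one event that each $A_t$, $t\le N$, has a positive diagonal. Every demand concerns a single matrix, the indices involved are pairwise distinct because the reserved blocks are disjoint, and each demand has positive probability by the definition of $G$; since the $A_t$ are independent, $\mP(F)=:\alpha>0$, and on $F$ each pair's walk can be executed inside its block while the path waits at the source beforehand and at the target afterwards, whence $A_N\cdots A_1>0$. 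The step to watch --- and the only real obstacle --- is this separation into disjoint time blocks: one may \emph{not} simply intersect several ``prescribed entry is positive'' events for the \emph{same} matrix, because a single $A_t$ need not make two prescribed off-diagonal entries positive with positive probability.

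Finally, cut the timeline into consecutive windows of length $N$ and let $P_m$ be the product of the $m$-th window's matrices in the order they act; the $P_m$ are i.i.d.\ with $\mP(P_m>0)\ge\alpha>0$, so $m^{*}:=\min\{m\ge 1:\ P_m>0\}$ is a.s.\ finite (geometric tail) and $\tau:=m^{*}N$ is an a.s.\ finite stopping time for the natural filtration. Writing $A_\tau\cdots A_1=P_{m^{*}}\cdot\big(A_{(m^{*}-1)N}\cdots A_1\big)$, the second factor has an a.s.\ positive diagonal --- a product of matrices with positive diagonals does, since $(BC)_{kk}\ge B_{kk}C_{kk}\ge 0$ --- so $\big(A_\tau\cdots A_1\big)_{ij}\ge\big(P_{m^{*}}\big)_{ij}\big(A_{(m^{*}-1)N}\cdots A_1\big)_{jj}>0$ for every $i,j$. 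Thus $A_\tau\cdots A_1>0$ a.s.\ for a finite stopping time $\tau$, i.e.\ $(A_n)$ is sequentially primitive. (One could hope to invoke Lemma~\ref{lem:SEQ_PRIM_IID} directly, but the support of the law of $A_1$ need not consist of allowable matrices, so the self-contained argument above is cleaner.)
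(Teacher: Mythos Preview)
The paper does not supply its own proof of this proposition; it is quoted from \cite{benezit2010weighted} purely for historical context, so there is nothing to compare against. Your argument is correct and self-contained: reserving disjoint time blocks of length $p-1$ for each ordered pair of vertices is exactly what is needed to avoid the pitfall you flag (two off-diagonal positivity constraints on the \emph{same} $A_t$ need not be jointly satisfiable), and since the diagonal-positivity event has probability one it intersects each single arc-positivity constraint without cost. The block argument producing a geometric $m^*$ and the final inequality $\big(A_\tau\cdots A_1\big)_{ij}\ge (P_{m^*})_{ij}\,\big(A_{(m^*-1)N}\cdots A_1\big)_{jj}$ are both sound, the latter relying implicitly on non-negativity of the matrices.

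Your closing remark is also on point: Lemma~\ref{lem:SEQ_PRIM_IID} cannot be invoked directly, because it requires $\supp\mu=\cA$ with every matrix in $\cA$ allowable, whereas the closed support of the law of $A_1$ may well contain boundary matrices with a zero on the diagonal even when the diagonal is a.s.\ strictly positive. The direct argument you give sidesteps this cleanly.
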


\medno
We note in passing that in  \cite{iutzeler2013analysis} the conditions of \cite{benezit2010weighted} are assumed to be satisfied for $\cal A,$ and in addition $|\cA|<\infty$ is assumed. It is easily seen that the condition that $\mE A_1$ is irreducible is in fact necessary for sequential primitivity: 

\begin{lemma}
	\label{lem:EA_1_Irreducible}
	Let $(A_n), n\ge 1$ be a strictly stationary sequence of non-negative matrices. Assume that $(A_n)$ is sequentially primitive. Then  $\mE A_{1}$ is irreducible.
\end{lemma}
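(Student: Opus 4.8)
The plan is to prove the contrapositive: assuming $\mE A_1$ is reducible, I will exhibit a fixed proper nonempty set of nodes that no finite product $A_m\cdots A_1$ can ever connect across, contradicting sequential primitivity. The starting point is the elementary remark that, because $A_1\ge 0$ entrywise, $(\mE A_1)_{ij}=0$ holds if and only if $(A_1)_{ij}=0$ almost surely. So suppose $\mE A_1$ is reducible; then there is a proper nonempty $T\subsetneq\{1,\dots,p\}$ such that $(\mE A_1)_{ij}=0$ for all $i\in T$ and $j\notin T$ (equivalently, after relabelling the nodes so that $T$ comes last, $\mE A_1$ is block upper-triangular). By the remark, $(A_1)_{ij}=0$ a.s.\ for every such pair, and by strict stationarity the same is true for each $A_n$, $n\ge 1$. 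Intersecting these countably many full-probability events together with $\{\tau<\infty\}$ and with the event that $A_\tau\cdots A_1$ is strictly positive, we obtain a single event $\Omega_0$ of probability one on which every $A_n$ inherits the block structure associated with $T$, the stopping time is finite, and the defining product of sequential primitivity is positive.

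On $\Omega_0$ I would run the standard graph/path argument. Regard each $A_n$ as the weighted adjacency matrix of a directed graph in which there is an arc $a\to b$ precisely when $(A_n)_{ab}>0$; the block structure then says that every arc with tail in $T$ has its head in $T$ as well. Fix $\omega\in\Omega_0$, set $m=\tau(\omega)$, and take any $i\in T$ and $j\notin T$. A positive value of $(A_m\cdots A_1)_{ij}$ would require indices $i=k_0,k_1,\dots,k_m=j$ with $(A_{m-\ell+1})_{k_{\ell-1}k_\ell}>0$ for every $\ell=1,\dots,m$, i.e.\ a directed path from $i$ to $j$ using one arc from each factor; but $k_0=i\in T$ forces $k_1\in T$, hence $k_2\in T$, and inductively $j=k_m\in T$, contradicting $j\notin T$. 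Therefore $(A_\tau\cdots A_1)_{ij}=0$ on $\Omega_0$, which contradicts strict positivity. Hence no such $T$ exists and $\mE A_1$ is irreducible.

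The argument is short, and the only places where care is genuinely needed — and where a careless version could fail — are two pieces of bookkeeping: getting the order of multiplication right so that the ``source'' index really propagates forward through successive factors (so that the block structure of each $A_n$ can be invoked along the path), and aggregating the almost-sure zero pattern of the individual $A_n$ into one full-measure event \emph{before} freezing the random value of $\tau$. Both are routine once the setup is written out carefully, so I would not expect any real obstacle here.
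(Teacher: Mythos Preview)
Your proof is correct and follows essentially the same route as the paper: argue by contradiction that reducibility of $\mE A_1$ forces a common block upper-triangular structure on every $A_n$ almost surely (via non-negativity and stationarity), which then propagates to all finite products and contradicts sequential primitivity. Your version is simply more explicit than the paper's sketch, spelling out the path argument for why the block structure is preserved under multiplication and the measure-theoretic aggregation of the null sets before evaluating $\tau$.
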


\medno
The proof will be given in the Appendix.
Note, however, that irreducibility in expectation is not sufficient by itself. For example consider an i.i.d.\ sequence distributed uniformly on
$\big(\begin{smallmatrix}
1 & 0\\
0 & 1
\end{smallmatrix}\big)$
and
$\big(\begin{smallmatrix}
0 & 1\\
1 & 0
\end{smallmatrix}\big)$. On the other hand, the condition that $A_1$ has a strictly positive diagonal almost surely is not necessary for sequentially primitivity. For example consider the i.i.d. sequence of matrices with distributed uniformly on
$\big(\begin{smallmatrix}
0 & 1\\
1 & 0
\end{smallmatrix}\big)$
and
$\big(\begin{smallmatrix}
1 & 1\\
0 & 1
\end{smallmatrix}\big),$
which is readily seen to be sequentially primitive.

\begin{lemma}
  \label{lem:tensor_seq_prim}
  Let $(A_n), n\ge 1$ be a strictly stationary sequence of non-negative matrices. Assume that $(A_n)$ is sequentially primitive. Then for any $k\in\mZ^+$, $(A_{n}^{\otimes k}), n\ge 1$ is also sequentially primitive.
\end{lemma}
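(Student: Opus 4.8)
The plan is to verify directly the three requirements in the definition of sequential primitivity for the tensored sequence $(A_n^{\otimes k})$: that each $A_n^{\otimes k}$ is a non-negative allowable $p^k\times p^k$ matrix, that the sequence is strictly stationary, and that some running product $A_\nu^{\otimes k}A_{\nu-1}^{\otimes k}\cdots A_1^{\otimes k}$ is strictly positive at an a.s.\ finite stopping time $\nu$.

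First I would record the elementary facts about Kronecker powers that do all the work. A Kronecker product of non-negative matrices is non-negative, and from the mixed-product rule $(BC)^{\otimes k}=B^{\otimes k}C^{\otimes k}$ one obtains by induction the identity $A_m^{\otimes k}A_{m-1}^{\otimes k}\cdots A_1^{\otimes k}=(A_mA_{m-1}\cdots A_1)^{\otimes k}$, which is the key computational fact. Since every entry of $B^{\otimes k}$ is a product of $k$ entries of $B$, strict positivity of $B$ implies strict positivity of $B^{\otimes k}$, and allowability of $B$ implies allowability of $B^{\otimes k}$: for a multi-index row $(i_1,\dots,i_k)$, choosing for each $l$ a column $j_l$ with $B_{i_l j_l}>0$ (possible because row $i_l$ of the allowable matrix $B$ has a positive entry) produces a positive entry of $B^{\otimes k}$ in that row, and the same reasoning handles the columns. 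Applied with $B=A_n$ this gives the first requirement. The second is immediate, since $A\mapsto A^{\otimes k}$ is a fixed Borel map and applying it coordinatewise to the strictly stationary process $(A_n)$ yields a strictly stationary process.

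For the third requirement I would set $\nu=\inf\{n\ge1:\ A_n^{\otimes k}A_{n-1}^{\otimes k}\cdots A_1^{\otimes k}\text{ is strictly positive}\}$, with the usual convention $\inf\emptyset=\infty$. The event $\{\nu=m\}$ is determined by $A_1^{\otimes k},\dots,A_m^{\otimes k}$, so $\nu$ is a stopping time for the natural filtration of $(A_n^{\otimes k})$, and on $\{\nu<\infty\}$ the product $A_\nu^{\otimes k}\cdots A_1^{\otimes k}$ is strictly positive by construction. Finiteness is where the mixed-product identity enters: letting $\tau$ be the a.s.\ finite stopping time furnished by sequential primitivity of $(A_n)$, on the a.s.\ event $\{A_\tau A_{\tau-1}\cdots A_1>0\}$ we have $A_\tau^{\otimes k}\cdots A_1^{\otimes k}=(A_\tau\cdots A_1)^{\otimes k}>0$, whence $\nu\le\tau<\infty$ almost surely. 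Together the three requirements give that $(A_n^{\otimes k})$ is sequentially primitive.

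The only place where a little care is needed — and thus the main, if minor, obstacle — is the stopping-time bookkeeping: one should not simply reuse $\tau$, since $\tau$ is adapted to $\sigma(A_1,\dots,A_n)$ while a stopping time for $(A_n^{\otimes k})$ must be adapted to $\sigma(A_1^{\otimes k},\dots,A_n^{\otimes k})$, which a priori could be strictly coarser. Defining $\nu$ as the first time strict positivity occurs avoids the issue entirely. (Should one wish, the two filtrations in fact coincide, because $A\mapsto A^{\otimes k}$ is injective on non-negative matrices with Borel inverse given by $A_{ij}=\bigl((A^{\otimes k})_{(i,\dots,i),(j,\dots,j)}\bigr)^{1/k}$, so $\tau$ itself would serve.) Everything else is routine Kronecker-product bookkeeping.
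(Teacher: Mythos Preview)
Your proof is correct and follows essentially the same approach as the paper, namely the mixed-product identity $A_n^{\otimes k}\cdots A_1^{\otimes k}=(A_n\cdots A_1)^{\otimes k}$ together with the observation that the tensor power of a strictly positive matrix is strictly positive. You are more careful than the paper in explicitly verifying allowability and strict stationarity of the tensored sequence and in addressing the filtration issue for the stopping time, points the paper's two-line argument leaves implicit.
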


\begin{proof}
  Observe that for any $n$
  $$
  A_n^{\otimes k}A_{n-1}^{\otimes k}\cdots A_1^{\otimes k} = \left(A_n A_{n-1}\cdots A_1\right)^{\otimes k},
  $$
  hence the left hand side is strictly positive exactly if $A_n A_{n-1}\cdots A_1$ is strictly positive, proving the claim. 
\end{proof}

\medno
The examples above indicate that sequential primitivity may be a fundamental concept, and this is indeed fully justified in \cite{gerencsr2019tight}, see in particular Theorem 19, restated as Proposition \ref{thm:ei Mnx vs ei Mnw COLUMN STOCH} below. The conditions of the latter, to be used throughout the paper, can be reformulated as follows:

\begin{assumption}
	\label{assumption:typicalsetup}
	Let $\cA$ be a set of  $p \times p$ matrices, and let $(A_n), n\ge 1$ be a $\cA$-valued stochastic process, satisfying the following conditions:  
	\begin{itemize}
		\item $\cA$ is a Borel set of non-negative, allowable, column-stochastic matrices. 
		\item $\cA$ is primitive.
		\item $(A_n), n\ge 1$ is an i.i.d.\ sequence of matrices in $\cA$.
		\item The distribution of $A_1$ is fully supported on $\cA$.
		\item Setting $\alpha_n:=\min_{i,j}\{A_{n}^{i,j}:A_{n}^{i,j}\neq 0\}$, we have $\mE\log^-\alpha_{1}>-\infty$.
	\end{itemize}
\end{assumption}

\medno
It is readily seen that the above assumptions on $(A_n), n\ge 1$ are significantly weaker than those in \cite{iutzeler2013analysis}, requiring the conditions of \cite{benezit2010weighted} to be satisfied for a finite $\cA$.

\section{Tight bounds for a.s. convergence}
\label{sec:tightbounds}

\medno
In this section we highlight the relevant conditions and results of \cite{gerencsr2019tight}, specialized to the context of the present paper. First of all we note that the condition $\mE \log^+ \Vert A_1 \Vert < \infty$ required by the F\"urstenberg--Kesten theorem and also by Oseledec's theorem, restated as Proposition 1 and 2 in \cite{gerencsr2019tight}, and used throughout that paper, is automatically satisfied for column-stochastic matrices. Following these fundamental results, let $\lambda_1$ and $\lambda_2$ be the first and second largest Lyapunov exponents associated with $(A_n).$
 
\medno
The condition of Theorem 8 of \cite{gerencsr2019tight}, serving as a benchmark for subsequent discussion, requiring that $A_n$ is non-negative and allowable for all $n,$ and that the process $(A_n)$ is sequentially primitive, is implied by Assumption \ref{assumption:typicalsetup}. 
Condition 11 of \cite{gerencsr2019tight}, imposing a kind of lower bound on the strictly positive elements of $A_n,$ is required by Assumption  \ref{assumption:typicalsetup} in identical form. 

Finally, the condition $ \lambda_1 - \lambda_2 > 0,$ required in the first main result of \cite{gerencsr2019tight}, stated as Theorem 12, is in fact implied by Assumption \ref{assumption:typicalsetup}, see Theorem 36 of \cite{gerencsr2019tight}. 

\medno 
Now we are in a position to restate Theorem 19 of \cite{gerencsr2019tight} in a specialized form, the reference result for identifying the convergence rate of ratio consensus,
with $w= \b1$ as follows, with $e_i, \, i=1, \dots ,p$ denoting the $i$-th unit vector: 

\begin{proposition}
	\label{thm:ei Mnx vs ei Mnw COLUMN STOCH}
	Let Assumption \ref{assumption:typicalsetup} be satisfied. Then for an arbitrary vector of initial values $x \in \mathbb R^p$ and initial weights $w = \b1,$ we have for all $i=1, \ldots, p$
	\begin{equation*}
	\limsup_{n \rightarrow \infty} {\frac 1 n} \log \left \vert \frac{e_i^\top M_n x}{e_i^\top M_n \b1}  - \bar x  \right \vert \le \lambda_2 < 0 \quad {\rm a.s.}
	\end{equation*}
\end{proposition}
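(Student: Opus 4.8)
The plan is to exploit the identity already visible in \eqref{eq:x_i_per_w_i_via_M_N}, namely $\frac{e_i^\top M_n x}{e_i^\top M_n \b1} - \bar x = \frac{e_i^\top N_n x}{w_n^i}$ with $w_n^i = e_i^\top M_n \b1$, and to estimate numerator and denominator separately on the exponential scale. Note first that $w_n^i > 0$ for all $n$, by induction from $w_0 = \b1$ and allowability of the $A_n$, so $\log w_n^i$ is well defined. For the numerator one has $|e_i^\top N_n x| \le \Vert N_n\Vert\,\Vert x\Vert$, and \eqref{eq:Express_N__n_as_Product} presents $(N_n)$ as the left random matrix product generated by the i.i.d.\ sequence $(A_n(I-J))$; since column-stochasticity keeps $\Vert A_n(I-J)\Vert$ bounded, the F\"urstenberg--Kesten theorem (Proposition~1 of \cite{gerencsr2019tight}) gives $\tfrac1n\log\Vert N_n\Vert \to \tilde\lambda$ a.s.\ for some $\tilde\lambda$. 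The first key point is the identification $\tilde\lambda = \lambda_2$: the hyperplane $V = \{v : \b1^\top v = 0\}$ is $M_n$-invariant, $N_n$ maps $\mR^p$ into $V$ and agrees with $M_n$ on $V$, while $M_n$ acts as the identity on $\mR^p/V$; column-stochasticity forces every Lyapunov exponent to be $\le 0$, so the quotient supplies $\lambda_1 = 0$, and the spectral gap $\lambda_1-\lambda_2>0$ (implied by Assumption~\ref{assumption:typicalsetup} via Theorem~36 of \cite{gerencsr2019tight}) then forces the exponents of $M_n|_V$ to be precisely $\lambda_2\ge\cdots\ge\lambda_p$. Hence $\tilde\lambda=\lambda_2<0$ and $\limsup_n \tfrac1n\log|e_i^\top N_n x| \le \lambda_2$.

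The main step is the matching lower bound $\liminf_n \tfrac1n\log w_n^i \ge 0$ a.s.; the reverse inequality is immediate because $e_i^\top M_n\b1 \le \Vert M_n\Vert_\infty \le p$. Two ingredients combine here. First, chaining through an allowable (hence nonzero) row of each $A_m$ gives, for any $s<n$, the pessimistic bound $w_n^i \ge \big(\prod_{m=s+1}^n \alpha_m\big)\,\min_j w_s^j$. Second, sequential primitivity together with full support (Lemma~\ref{lem:SEQ_PRIM_IID}) supplies a fixed block length $N$, a fixed $\gamma_0>0$ and a probability $q_0>0$ such that, with probability at least $q_0$, the product over a length-$N$ block has all entries $\ge\gamma_0$; since $\b1^\top w_m\equiv p$ is conserved, at the end of such a ``good'' block one has $\min_j w^j \ge \gamma_0 p =: c_0$. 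Splitting the timeline into consecutive length-$N$ blocks, independence and a Borel--Cantelli estimate yield that a.s.\ the time $s(n)$ at which the last good block before $n$ ends satisfies $n-s(n) = O(\log n)$, with $\min_j w_{s(n)}^j \ge c_0$. Then $\log w_n^i \ge \log c_0 - \sum_{m=s(n)+1}^n \log^-\alpha_m$, and since $\mE\log^-\alpha_1<\infty$ the strong law gives $\sum_{m=1}^k \log^-\alpha_m = k\,\mE\log^-\alpha_1 + o(k)$, so the tail sum over the last $O(\log n)$ indices is $\mE\log^-\alpha_1\cdot O(\log n) + o(n) = o(n)$. Therefore $\tfrac1n\log w_n^i \to 0$ a.s.

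Putting the two estimates together, $\limsup_n \tfrac1n\log\big|\tfrac{e_i^\top M_n x}{e_i^\top M_n\b1} - \bar x\big| \le \lambda_2 - 0 = \lambda_2 < 0$ a.s., which is the claim. I expect the denominator bound to be the delicate part: one must prevent the potentially unbounded per-step contraction recorded by $\alpha_m$ from accumulating, which succeeds only because the ``spreading'' produced by sequential primitivity recurs with merely logarithmic gaps while $\log^-\alpha_1$ is integrable. The other point needing care is the identification $\tilde\lambda=\lambda_2$, which rests on the behaviour of the Oseledec spectrum under the invariant splitting of $\mR^p$ into $V$ and $\mR^p/V$; this is part of the multiplicative ergodic theory developed in \cite{gerencsr2019tight}, and Theorem~19 there may be invoked directly in lieu of a self-contained derivation.
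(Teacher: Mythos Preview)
Your argument is correct. Note, however, that in the paper this proposition is not proved at all: it is stated as a specialization of Theorem~19 of \cite{gerencsr2019tight}, so there is no in-paper proof to compare against beyond the citation. What you have written is therefore a self-contained outline of that result, and it is worth recording where your route diverges from what the present paper does use.

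For the denominator you prove directly that $\tfrac1n\log w_n^i\to 0$ a.s.\ via a ``good-block'' recurrence argument: sequential primitivity gives a fixed block length $N$ and $\gamma_0,q_0>0$ with $\mathbb P(\min_{ij}(A_N\cdots A_1)_{ij}\ge\gamma_0)\ge q_0$; Borel--Cantelli forces the last such block before time $n$ to lie within $O(\log n)$ steps; and the integrability $\mE\log^-\alpha_1>-\infty$ together with the SLLN controls the interstitial contraction. This is precisely Lemma~\ref{lem:w_subexp1} of the paper, but the paper's proof of that lemma instead invokes Lemma~44 of \cite{gerencsr2019tight} on the asymptotic rank-one structure of $M_n$ (row ratios $M_n^{ik}/M_n^{jk}$ sub-exponential) and then averages over $k$. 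Your argument is more elementary and avoids the rank-one machinery; the paper's route is shorter given the citation but imports a stronger statement than needed.

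For the numerator your identification $\tilde\lambda=\lambda_2$ via the $M_n$-invariant hyperplane $V=\{\b1^\top v=0\}$, the observation $\|N_n\|=\|M_n|_V\|$, and the splitting of the Oseledec spectrum along $V$ and $\mR^p/V$ is the standard mechanism; the only external input is $\lambda_1-\lambda_2>0$ under Assumption~\ref{assumption:typicalsetup}, which you correctly source to Theorem~36 of \cite{gerencsr2019tight}. One small remark: your claim that a fixed $\gamma_0>0$ exists with $\mathbb P(\min_{ij}(A_N\cdots A_1)_{ij}\ge\gamma_0)>0$ does not follow from sequential primitivity alone but does follow once you have $\mathbb P(A_N\cdots A_1>0)>0$ by inner continuity of measure, so the step is sound.
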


\medno
In the current development will need a critical auxiliary technical result on the evolution of the weight vector $w_n=A_nA_{n-1}\cdots A_1\b1$.
\begin{lemma}
  \label{lem:w_subexp1}
  Let Assumption \ref{assumption:typicalsetup} be satisfied. Then 
  $1/\min_iw_{n}^i$ is sub-exponential: 
  $$\limsup_{n\to\infty}\frac 1 n \log \frac{1}{\min_iw_{n}^i} \le 0.$$
\end{lemma}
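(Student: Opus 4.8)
The plan is to combine a crude coordinatewise estimate, valid at every single step because the matrices are allowable, with the fact that strictly positive products are formed along the timeline with controlled frequency, periodically resetting $\min_i w_n^i$ to a quantity of constant order. First note that allowability of $A_n$ means every row contains an entry $\ge\alpha_n$, so, dropping the other (nonnegative) terms, $w_n^i=\sum_j A_n^{i,j}w_{n-1}^j\ge\alpha_n\min_j w_{n-1}^j$, i.e.\ $\min_i w_n^i\ge\alpha_n\min_i w_{n-1}^i$; iterating from any earlier time $m$ gives $\min_i w_n^i\ge\big(\prod_{k=m+1}^{n}\alpha_k\big)\min_i w_m^i$. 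Used from $m=0$, where $w_0=\b1$, this already yields $\limsup_n\frac1n\log(1/\min_i w_n^i)\le-\mE\log\alpha_1$ by the strong law; but $-\mE\log\alpha_1$ is a strictly positive constant in general, so the crude bound alone is too weak, and the remedy is to take $m=m(n)$ close to $n$.

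To control the frequency of positive products I would argue as follows. Since $\cA$ is primitive there are $B_1,\dots,B_\ell\in\cA$ with $B_1\cdots B_\ell$ strictly positive; strict positivity is an open condition and matrix multiplication is continuous, so there are open sets $U_i\ni B_i$ with the property that $C_i\in U_i$ for all $i$ forces $C_1\cdots C_\ell>0$, and full support of the law of $A_1$ yields $q:=\prod_i\mP(A_1\in U_i)>0$. Partitioning the timeline into consecutive blocks of length $\ell$, the events ``the $j$-th block product $A_{j\ell}A_{j\ell-1}\cdots A_{(j-1)\ell+1}$ is strictly positive'' are i.i.d.\ with probability $\ge q$, so infinitely many hold almost surely, and the gaps between consecutive such good blocks are i.i.d.\ with finite mean. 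Let $m(n)$ denote the right endpoint of the most recent good block at or before $n$ (defined for all large $n$). The strong law applied to these gaps gives $n-m(n)=o(n)$ a.s.\ (indeed $O(\log n)$, by the Erd\H{o}s--R\'enyi longest-run law).

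Next I would combine the two estimates. On the good block ending at $m(n)$ the product $P:=A_{m(n)}A_{m(n)-1}\cdots A_{m(n)-\ell+1}$ is strictly positive, so each entry of $P$ dominates the $\alpha$-product along a positive path, $P^{i,j}\ge\prod_{k=m(n)-\ell+1}^{m(n)}\alpha_k$ for all $i,j$; together with $\b1^\top w_{m(n)-\ell}=p$ (column stochasticity) this gives $\min_i w_{m(n)}^i=\min_i(Pw_{m(n)-\ell})^i\ge p\prod_{k=m(n)-\ell+1}^{m(n)}\alpha_k$. Multiplying by the crude bound from $m(n)$ to $n$ gives $\min_i w_n^i\ge p\prod_{k=m(n)-\ell+1}^{n}\alpha_k$, hence, discarding the harmless $-\log p\le0$ term,
\[
\frac1n\log\frac1{\min_i w_n^i}\le-\Big(\frac1n\sum_{k=1}^{n}\log\alpha_k-\frac1n\sum_{k=1}^{m(n)-\ell}\log\alpha_k\Big).
\]
Column stochasticity forces $\alpha_k\le1$, and Assumption~\ref{assumption:typicalsetup} gives $\mE\log^-\alpha_1>-\infty$, so $\log\alpha_1$ is integrable; by the strong law $\frac1N\sum_{k=1}^{N}\log\alpha_k\to\mE\log\alpha_1$ a.s., and since $m(n)-\ell\to\infty$ with $(m(n)-\ell)/n\to1$ both averages on the right converge to this same finite limit. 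Therefore the right-hand side tends to $0$, which gives $\limsup_n\frac1n\log(1/\min_i w_n^i)\le0$ a.s.

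The step I expect to be the main obstacle is the frequency statement: extracting a \emph{fixed} block length $\ell$ and a \emph{uniform} positive probability $q$ from primitivity when $\cA$ is only assumed to be Borel --- this is where the openness/continuity argument and the full-support hypothesis enter --- and then converting the resulting i.i.d.\ block structure into the recency estimate $n-m(n)=o(n)$. Everything after that is the two elementary matrix inequalities and a windowed application of the strong law.
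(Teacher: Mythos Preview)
Your argument is correct and is a genuinely different, more self-contained route than the paper's. The paper does not argue directly with $\min_i w_n^i$ at all: it invokes Lemma~44 of \cite{gerencsr2019tight}, which asserts that each ratio $M_n^{ik}/M_n^{jk}$ is sub-exponential (the asymptotic rank-one structure of $M_n$), observes that $w_n^i/w_n^j$ is a convex combination of such ratios, and then uses $\sum_i w_n^i/w_n^j = p/w_n^j$ to conclude. Your proof instead stays elementary: the single-step allowability bound $\min_i w_n^i\ge\alpha_n\min_i w_{n-1}^i$, the reset $\min_i w_{m}^i\ge p\prod_k\alpha_k$ on strictly positive blocks (using $\b1^\top w=p$), the openness/full-support extraction of a fixed block length $\ell$ with uniform success probability $q>0$, and a windowed SLLN on $\log\alpha_k$ with $m(n)/n\to1$. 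What this buys you is independence from the rank-one machinery of the companion paper; what the paper's route buys is brevity and a direct link to the structural result that drives the rest of the convergence analysis. Two minor remarks: the Erd\H{o}s--R\'enyi $O(\log n)$ aside is true but unused, since $n-m(n)=o(n)$ already suffices; and your labeling of $B_1\cdots B_\ell$ versus the time-reversed block product $A_{j\ell}\cdots A_{(j-1)\ell+1}$ is a harmless index permutation, but worth stating once for clarity.
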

The proof will be given in the Appendix. 

\section{Restricted contraction of $A_nA_{n-1}\cdots A_1$}
\label{sec:contract}

\medno
We will estimate higher order moments of $N_n$ by considering higher order tensor products $N_n^{\otimes k}$ with some $k\in\mZ^+$. To set the notations, let $B_n = A_n(I-J)$ and $\|\cdot\|_F$ denote the Frobenius norm of a square matrix.
Note that for any square matrix $S$, the sum of squares of the elements of $S,$ expressing $\|S\|^2_F,$ is in fact a sum of selected elements of $S\otimes S$, and thus we can write, with an appropriate linear functional $L$, 
\begin{equation}
\label{eq:frob_vs_lin}
\left\|S\right\|^2_F = L (S \otimes S).
\end{equation}
Finally, let $\rho(\cdot)$ denote the spectral radius a square matrix.

\begin{lemma}
  \label{lem:frob_of_N_n_rate}
  Under Assumption \ref{assumption:typicalsetup} we have with $\eta_{2k} =\log\rho\left(\mE(B_{1}^{\otimes 2k})\right)$
  \begin{align}
    \label{eq:frob_of_N_n_rate}
    \limsup_{n\to\infty} \frac 1 n \log \mE\left\|N_n^{\otimes k}\right\|^2_F &\le \eta_{2k} < 0.
  \end{align}
\end{lemma}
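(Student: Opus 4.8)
The plan is to rewrite $\mE\|N_n^{\otimes k}\|_F^2$ as the action of a fixed linear functional on the $n$-th power of a fixed matrix, read off its exponential growth rate as $\eta_{2k}$ via Gelfand's formula, and then obtain the strict negativity $\eta_{2k}<0$ separately by importing the almost-sure convergence of the ratios.

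First I would work through the matrices $B_m=A_m(I-J)$, for which $N_n=B_nB_{n-1}\cdots B_1$ by \eqref{eq:Express_N__n_as_Product}. The mixed-product rule for Kronecker products, already used in the proof of Lemma \ref{lem:tensor_seq_prim}, gives $N_n^{\otimes 2k}=B_n^{\otimes 2k}B_{n-1}^{\otimes 2k}\cdots B_1^{\otimes 2k}$ and also $N_n^{\otimes k}\otimes N_n^{\otimes k}=N_n^{\otimes 2k}$. Applying \eqref{eq:frob_vs_lin} with $S=N_n^{\otimes k}$ yields $\|N_n^{\otimes k}\|_F^2=L(N_n^{\otimes 2k})$ with $L$ the fixed linear functional selecting the relevant entries. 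Taking expectations, and using that $(A_n)$, hence $(B_n^{\otimes 2k})$, is i.i.d.\ together with the linearity of $L$,
\[
\mE\|N_n^{\otimes k}\|_F^2=L\big((\mE B_1^{\otimes 2k})^n\big)=L(Q^n),\qquad Q:=\mE\big(B_1^{\otimes 2k}\big),
\]
where all entries involved are bounded, so $Q$ is well defined and $\eta_{2k}=\log\rho(Q)$. Since $L$ is a fixed linear functional on a finite-dimensional space, $|L(Q^n)|\le c\,\|Q^n\|$ for a constant $c$ and a matrix norm $\|\cdot\|$, uniformly in $n$; Gelfand's formula $\|Q^n\|^{1/n}\to\rho(Q)$ then gives $\limsup_n\frac1n\log\mE\|N_n^{\otimes k}\|_F^2\le\log\rho(Q)=\eta_{2k}$.

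It remains to prove $\rho(Q)<1$. Note $Q^n=\mE(N_n^{\otimes 2k})$, so by the triangle inequality for a norm and $\|N_n^{\otimes 2k}\|_2=\|N_n\|_2^{2k}$ (Kronecker powers multiply singular values), $\|Q^n\|_2\le\mE\|N_n\|_2^{2k}$. The integrand is dominated by a constant, since $\|N_n\|_2\le\|M_n\|_2\,\|I-J\|_2\le\sqrt p$: indeed $M_n$ is column-stochastic, whence $\|M_n\|_1=1$ and $\|M_n\|_\infty\le p$, so $\|M_n\|_2\le\sqrt p$, while $I-J$ is an orthogonal projection. Moreover $\|N_n\|_2\to0$ a.s.: by Proposition \ref{thm:ei Mnx vs ei Mnw COLUMN STOCH} applied with initial values $e_j$ together with \eqref{eq:x_i_per_w_i_via_M_N}, $\limsup_n\frac1n\log|e_i^\top N_n e_j/w_n^i|\le\lambda_2$, and since $0\le w_n^i\le\b1^\top w_n=p$ this gives $\limsup_n\frac1n\log|(N_n)_{ij}|\le\lambda_2<0$ for all $i,j$. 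Hence $\mE\|N_n\|_2^{2k}\to0$ by bounded convergence, so $\|Q^{n_0}\|_2<1$ for some $n_0$, and therefore $\rho(Q)^{n_0}=\rho(Q^{n_0})\le\|Q^{n_0}\|_2<1$. This yields $\eta_{2k}=\log\rho(Q)<0$ and completes the proof.

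I expect the main obstacle to be the strict inequality: the exact identity $\mE\|N_n^{\otimes k}\|_F^2=L(Q^n)$ pins down the exponential rate as $\log\rho(Q)$ but says nothing about its sign, so one must step outside the moment computation and combine the a.s.\ convergence of the ratios (Proposition \ref{thm:ei Mnx vs ei Mnw COLUMN STOCH}) with the deterministic boundedness of $N_n$ to upgrade an almost-sure statement to an $L^1$ one. The rest --- the Kronecker-product bookkeeping, the reduction to $Q^n$, and the application of Gelfand's formula --- is routine.
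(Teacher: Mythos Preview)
Your derivation of the first inequality is the same as the paper's: both rewrite $\mE\|N_n^{\otimes k}\|_F^2=L\big((\mE B_1^{\otimes 2k})^n\big)$ via the mixed-product rule and \eqref{eq:frob_vs_lin}, and then invoke Gelfand's formula.

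For the strict negativity $\eta_{2k}<0$ the two arguments diverge. The paper proceeds purely algebraically: $\mE(A_1^{\otimes 2k})$ is column-stochastic and, by Lemmas \ref{lem:tensor_seq_prim} and \ref{lem:EA_1_Irreducible}, irreducible, so Perron--Frobenius pins down its unique dominant eigenvalue $1$ with left eigenvector $\b1^{\otimes 2k\top}$; since the range of $(I-J)^{\otimes 2k}$ lies in the complementary invariant subspace, one factors $Q=\tilde B\,(I-J)^{\otimes 2k}$ with $\tilde B=\mE(A_1^{\otimes 2k})(I^{\otimes 2k}-J^{\otimes 2k})$ stable, and a telescoping identity gives $Q^n=\tilde B^n(I-J)^{\otimes 2k}$, hence $\rho(Q)\le\rho(\tilde B)<1$. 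Your route instead imports Proposition \ref{thm:ei Mnx vs ei Mnw COLUMN STOCH} to obtain $(N_n)_{ij}\to 0$ a.s., combines it with the uniform bound $\|N_n\|_2\le\sqrt p$ and dominated convergence to get $\mE\|N_n\|_2^{2k}\to 0$, and concludes $\|Q^{n_0}\|_2<1$ for some $n_0$. Both are correct. The paper's version keeps the lemma self-contained, needing only Perron--Frobenius and the elementary Lemmas \ref{lem:SEQ_PRIM_IID}--\ref{lem:tensor_seq_prim}; this matches its stated goal of a transparent proof independent of the Lyapunov-exponent machinery. Your version is shorter once Proposition \ref{thm:ei Mnx vs ei Mnw COLUMN STOCH} is available, but it has the mildly awkward feature that the sign of the \emph{computable} quantity $\eta_{2k}$ is established by appealing to the \emph{uncomputable} $\lambda_2$.
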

The lemma above was given in \cite{iutzeler2013analysis} for the case $k = 1$ with a proof, relying on another paper of the authors. Lemma \ref{lem:frob_of_N_n_rate} is thus a generalization for all integers $k$, together with a direct, simple proof. This generalization is also relevant in estimating higher order moments of the error obtained in the course of linear gossip algorithms.

\begin{proof}
Taking the $2k$-th tensor power of \eqref{eq:Express_N__n_as_Product}, followed by taking expectation, recalling that $(A_m)$ is i.i.d., we get
\begin{align*}
  \mE(N_n^{\otimes 2k})=\mE\left(B_{n}^{\otimes 2k}\right) \cdots \mE\left(B_{1}^{\otimes 2k}\right)=\left(\mE(B_{1}^{\otimes 2k})\right)^n.
\end{align*}
From here using \eqref{eq:frob_vs_lin} we get
\begin{equation*}
  \mE\left\|N_n^{\otimes k}\right\|^2_F = \mE \left(L N_n^{\otimes 2k}\right) = L \mE \left(N_n^{\otimes 2k}\right) = L \left(\mE(B_{1}^{\otimes 2k})\right)^n.
\end{equation*}
$L$ is a fixed linear functional, thus
\begin{align*}
  \limsup_{n\to\infty} &\frac 1 n \log \mE\left\|N_n^{\otimes k}\right\|^2_F \le\\
  &\le \limsup_{n\to\infty} \frac 1 n \log \left(\|L\|\! \cdot\! \left\| \mE(B_{1}^{\otimes 2k})^n \right\|\right) = \eta_{2k},
\end{align*}
using a standard expression of the spectral radius. This confirms the first inequality in \eqref{eq:frob_of_N_n_rate}.

For the second part of the inequality, note that the expectation of the column-stochastic $A_1^{\otimes 2k}$ is itself column-stochastic and the primitivity assumptions provide irreducibility by Lemmas \ref{lem:SEQ_PRIM_IID} and \ref{lem:EA_1_Irreducible}. Therefore the Perron-Frobenius theorem ensures a single maximal eigenvalue with left eigenvector $\b1^{\otimes 2k \top}$. Therefore multiplying $\mE(A_1^{\otimes 2k})$ by the projection $P_1=(I^{\otimes 2k}-J^{\otimes 2k})$, which maps $\mR^{\otimes 2k}$ into the orthogonal complement of $\b1^{\otimes 2k \top}$ and acts as identity there will result in the stable matrix $\tilde B = \mE(A_1^{\otimes 2k})P_1$. By the same observation, $P_1 (I-J)^{\otimes 2k} = (I-J)^{\otimes 2k}$. Consequently we may express the log spectral radius of interest as
\begin{equation}
  \label{eq:exp_tensor_itself}
\limsup_{n\to\infty} \frac 1 n\log \left\|\left(\mE(A_1^{\otimes 2k})P_1 (I-J)^{\otimes 2k}\right)^n\right\|  
\end{equation}
Note that \eqref{eq:i-j} can be extended to the tensor power, also inserting $P_1$ using the invariance observed above, i.e.,
$$
(I-J)^{\otimes 2k} \tilde B (I-J)^{\otimes 2k} = \tilde B (I-J)^{\otimes 2k}.
$$
Repeatedly applying this to the product inside the expression of \eqref{eq:exp_tensor_itself} we arrive at
$$
\limsup_{n\to\infty} \frac 1 n\log \left\|\tilde B^n (I-J)^{\otimes 2k}\right\|\le \log \rho(\tilde B) < 0.
$$
\end{proof}

\begin{corollary}
  \label{cor:tens_as}
  Under Assumption \ref{assumption:typicalsetup},
  \begin{equation}
    \label{eq:frob_as}
    \limsup_{n\to\infty} \frac 1 n \log \left\|N_n^{\otimes k}\right\|^2_F \le \eta_{2k}\quad\rm{a.s.}.    
  \end{equation}
\end{corollary}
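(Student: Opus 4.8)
The goal is to upgrade the mean (expectation) convergence rate bound of Lemma~\ref{lem:frob_of_N_n_rate} to an almost sure statement. The standard device for this is a Borel--Cantelli / exponential Markov inequality argument, since $\|N_n^{\otimes k}\|_F^2$ is a non-negative random variable whose expectation decays like $e^{n\eta_{2k}}$ with $\eta_{2k}<0$.

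The plan is as follows. Fix any $\eps>0$ small enough that $\eta_{2k}+\eps<0$ — this is possible precisely because $\eta_{2k}<0$. By Lemma~\ref{lem:frob_of_N_n_rate}, for all $n$ large enough we have $\mE\|N_n^{\otimes k}\|_F^2 \le e^{n(\eta_{2k}+\eps/2)}$. Applying Markov's inequality to the non-negative variable $\|N_n^{\otimes k}\|_F^2$ with threshold $e^{n(\eta_{2k}+\eps)}$ gives
\begin{equation*}
  \mP\left(\|N_n^{\otimes k}\|_F^2 \ge e^{n(\eta_{2k}+\eps)}\right) \le e^{-n(\eta_{2k}+\eps)}\,\mE\|N_n^{\otimes k}\|_F^2 \le e^{-n\eps/2}
\end{equation*}
for $n$ large. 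Since $\sum_n e^{-n\eps/2} < \infty$, the Borel--Cantelli lemma yields that almost surely $\|N_n^{\otimes k}\|_F^2 < e^{n(\eta_{2k}+\eps)}$ for all but finitely many $n$, hence
\begin{equation*}
  \limsup_{n\to\infty}\frac1n\log\|N_n^{\otimes k}\|_F^2 \le \eta_{2k}+\eps \quad\text{a.s.}
\end{equation*}
Finally, taking a sequence $\eps = \eps_m \downarrow 0$ and intersecting the corresponding almost sure events (a countable intersection of full-measure sets still has full measure) gives $\limsup_{n\to\infty}\frac1n\log\|N_n^{\otimes k}\|_F^2 \le \eta_{2k}$ almost surely, which is exactly \eqref{eq:frob_as}.

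There is essentially no serious obstacle here: the work has already been done in Lemma~\ref{lem:frob_of_N_n_rate}, and the remaining argument is the routine passage from an $L^1$-decay rate to an almost sure decay rate via Markov plus Borel--Cantelli. The only points requiring a word of care are that one cannot simply take $\eps=0$ directly (hence the $\eps\downarrow0$ limit over a countable set of values), and that the exceptional null set depends on $\eps$ but the countable union of these null sets is still null. I would present the proof in roughly the compact form above, noting that it is a standard argument and citing Lemma~\ref{lem:frob_of_N_n_rate} for the input bound.
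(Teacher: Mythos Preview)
Your proof is correct and matches the paper's approach: the paper's proof is a one-sentence sketch invoking a ``Chernoff-inequality and Borel--Cantelli'' combination with $\eps=1/l$ for $l\in\mZ^+$, which is exactly the Markov-plus-Borel--Cantelli argument you have written out in full. (The restriction $\eta_{2k}+\eps<0$ you impose is unnecessary but harmless.)
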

\begin{proof}
  Given the moment bound of Theorem \ref{lem:frob_of_N_n_rate}, by a standard combination of the Chernoff-inequality and the Borel-Cantelli lemma, for any fixed $l\in\mZ^+$ the event $\frac 1 n \log \left\|N_n^{\otimes k}\right\|^2_F > \eta_{2k}+\frac 1 l$ occurs finitely many times a.s. which then combined for all $l\in\mZ^+$ confirms the claim.
\end{proof}

\begin{proof}[Proof of Theorem \ref{thm:Main}]
  We perform a slight rearrangement so that we can introduce the $2k$-th power of a single term. For any positive $p$-tuple of $u_i$ we may write
  $
  \log\sum_{i=1}^p u_i \le \log (p \max_i u_i) = \frac 1 {2k} \log (p \max_i u_i)^{2k}.
  $
  For our target expression this translates to 
  \begin{equation}
  \label{eq:pf_thm_rear}
  \begin{aligned}
    \limsup_{n\to\infty} &\frac 1 n \log \sum_{i=1}^p \bigg|\frac{x_n^i}{w_n^i} -\bar x\bigg|\le\\
                         &\limsup_{n\to\infty} \frac 1 {2kn} \log \max_i\left|\frac{e_i^{\top}N_nx_0}{w_n^i}\right|^{2k}.
  \end{aligned}
  \end{equation}
  To get a hand on this quantity, recall that the denominator is sub-exponential by Lemma \ref{lem:w_subexp1}, thus it does not alter the rate. For the numerator, there holds for some $c_{2k}>0$
  \begin{equation}
    \label{eq:pf_thm_bound}
    \begin{aligned}
    |e_i^{\top}N_n x_0|^{2k} &\le c_{2k} \|e_i^{\top}\|^{2k} \|N_n\|_F^{2k} \|x_0\|^{2k}\\
    &= c_{2k}\|e_i^{\top}\|^{2k} \|N_n^{\otimes k}\|_F^{2} \|x_0\|^{2k},
  \end{aligned}
  \end{equation}
  using that for any $S$, both $\|S^{\otimes k}\|_F^{2},\|S\|_F^{2k}$ express the sum of all $k$-fold products of the squared elements of $S$ and are thus equal.
  Plugging this back to \eqref{eq:pf_thm_rear} and using the result of Corollary \ref{cor:tens_as} we get the upper bound of $\eta_{2k}/{2k}$ on the rate. Set $k=1$ to confirm the claim.
\end{proof}
Combining Lemma \ref{lem:frob_of_N_n_rate} with \eqref{eq:pf_thm_bound} above we get a $2k$-moment convergence rate bound for \emph{linear} consensus:
\begin{corollary}
  \label{cor:lin_moment}
  Under Assumption \ref{assumption:typicalsetup} further requiring $A_n$ to be doubly stochastic there holds
  \begin{equation*}
    \limsup_{n\to\infty} \frac 1 n \log \mE \|M_nx_0 - \bar x\b1\|^{2k} \le \eta_{2k}.
  \end{equation*}
\end{corollary}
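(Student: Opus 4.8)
The plan is to reduce the linear-consensus error directly to the matrix $N_n$ that has already been controlled, exploiting the fact that double stochasticity freezes the weight vector. First I would note that if every $A_n$ is doubly stochastic, then not only $\b1^\top M_n = \b1^\top$ but also $M_n\b1 = \b1$, and hence $M_nJ = \b1\b1^\top/p = J$. Since $\bar x\b1 = Jx_0$, this gives
$$
M_nx_0 - \bar x\b1 = M_nx_0 - M_nJx_0 = M_n(I-J)x_0 = N_nx_0,
$$
so the quantity to be estimated is simply $\mE\|N_nx_0\|^{2k}$. (Note the contrast with the general ratio-consensus case of Theorem \ref{thm:Main}, where column stochasticity alone does not force $w_n=\b1$ and one must additionally invoke Lemma \ref{lem:w_subexp1} to neutralize the denominator.)

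Next I would bound $\|N_nx_0\|^{2k}$ by a Frobenius-norm expression of $N_n^{\otimes k}$, mirroring \eqref{eq:pf_thm_bound}. Since the operator norm is dominated by the Frobenius norm, $\|N_nx_0\|^2 \le \|N_n\|_F^2\|x_0\|^2$; raising to the $k$-th power and using the identity $\|S\|_F^{2k} = \|S^{\otimes k}\|_F^2$ valid for any square $S$ (both sides enumerate the $k$-fold products of the squared entries of $S$), one obtains
$$
\|N_nx_0\|^{2k} \le \|N_n^{\otimes k}\|_F^2\,\|x_0\|^{2k}.
$$
Taking expectations and pulling out the constant $\|x_0\|^{2k}$ yields $\mE\|M_nx_0 - \bar x\b1\|^{2k} \le \|x_0\|^{2k}\,\mE\|N_n^{\otimes k}\|_F^2$.

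Finally I would apply Lemma \ref{lem:frob_of_N_n_rate}, which gives $\limsup_{n\to\infty}\tfrac1n\log\mE\|N_n^{\otimes k}\|_F^2 \le \eta_{2k}$; since $\tfrac1n\log\|x_0\|^{2k}\to 0$, the constant prefactor does not affect the exponential rate, and the claimed bound follows. There is essentially no hard step here: the only point requiring care is the reduction in the first paragraph, which genuinely uses double stochasticity rather than mere column stochasticity. I should also remark that the corollary as stated only asserts $\le\eta_{2k}$, so the strict-negativity half of Lemma \ref{lem:frob_of_N_n_rate} is not needed for the statement, although it does confirm that the decay is genuinely exponential.
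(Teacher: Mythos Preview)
Your proposal is correct and follows essentially the same route the paper indicates: the paper's proof is just the one-line remark ``Combining Lemma~\ref{lem:frob_of_N_n_rate} with \eqref{eq:pf_thm_bound}'', and you have spelled out precisely that combination, including the step (left implicit in the paper) that double stochasticity forces $M_n\b1=\b1$ and hence $M_nx_0-\bar x\b1=N_nx_0$. Your Frobenius-norm bound $\|N_nx_0\|^{2k}\le\|N_n^{\otimes k}\|_F^2\|x_0\|^{2k}$ is the vector-norm analogue of \eqref{eq:pf_thm_bound}, so nothing differs in substance.
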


\section{Optimizing the tensor exponent $k$}
\label{sec:k_optim}

As we have seen in the proof of Theorem \ref{thm:Main}, the main term in \eqref{eq:pf_thm_bound} quantifying the error becomes $\|N_n\|_F^{2k}$ once $k$ is chosen, which is then further bounded with the tools obtained before.
Directly examining $\|N_n\|_F^{2k}$ we would get the so-called $s=2k$-th mean Lyapunov exponent that could be defined for any $s>0$, see \cite{arnold1986lyapunov}, as 
\begin{equation}
  \lambda^{s} = \lim_{n \rightarrow \infty} {\frac 1 n} \, \mE \Vert B_n \cdots B_1 \Vert^s.
\end{equation}
It is easy to see that the limit on the right hand side does exist, and the function $\lambda^{s}$ is convex in $s,$ and $\lambda^{s}/s$ is monotone non-decreasing. We now show that the same holds as a discrete series for the computable bound $\eta_{2k}/(2k)$, implying $k=1$ is optimal, in line with the choice in Theorem \ref{thm:Main}.

Towards showing the (mid-point) convexity of $\eta_{2k}$ we present an inequality of general interest, a Cauchy-Schwartz type comparison for tensor products. Among various versions available in the literature, the current proof is significantly shorter than the one found in \cite{gerencser2008lq}. The main ideas are built on \cite{pisier2003introduction}, referring to \cite{haagerup1985grothendieck}, but now interpreted in a simple way that is sufficient for our finite dimensional setting without needing to delve into the operator space theoretical context.

\begin{lemma}
  \label{lm:cs-tensor}
  Let us consider random matrices $X$ and $Y$.
  Then there is a constant $C>0$ depending on the dimensions such that
  \begin{equation*}
    \label{eq:cs-tensor}
    \| \mE (X\otimes Y) \| \le C \sqrt{\| \mE (X\otimes X) \|} \cdot \sqrt{\| \mE (Y\otimes Y) \|}.
  \end{equation*}
  For square matrices we further have
  \begin{equation*}
    \label{eq:cs-tensor-spectral}
    \rho( \mE (X\otimes Y) ) \le \sqrt{\rho(\mE(X\otimes X)) } \cdot \sqrt{\rho(\mE (Y\otimes Y)) }.
  \end{equation*}
\end{lemma}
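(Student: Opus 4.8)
The plan is to deduce both inequalities from a single, clean observation: that for any random matrix $X$, the Frobenius norm identity $\|X\|_F^2 = L(X\otimes X)$ of \eqref{eq:frob_vs_lin} together with linearity of expectation gives $\mE\|X\|_F^2 = L(\mE(X\otimes X))$, and more importantly that $\mE(X\otimes X)$ is a positive semidefinite matrix — indeed for any vector $v$ (thought of, via the natural isomorphism, as a matrix $V$), $v^\top \mE(X\otimes X) v = \mE\langle v, (X\otimes X)v\rangle = \mE \|X^\top V X'\|$-type quantities arrange into $\mE\,\mathrm{tr}((XVX^\top)(\cdot))\ge 0$ after the standard vectorization bookkeeping; the honest statement is that $\mE(X\otimes X)$ is the Gram-type operator $\mE(\mathrm{vec}(X)\mathrm{vec}(X)^\top)$ up to a fixed permutation of indices, hence PSD. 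The first inequality is then a Cauchy--Schwartz inequality for the bilinear form $(X,Y)\mapsto \mE(X\otimes Y)$, which is exactly the ``cross Gram matrix'' $\mE(\mathrm{vec}(X)\mathrm{vec}(Y)^\top)$ again up to a fixed permutation.

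First I would set up the vectorization dictionary: fix the linear isomorphism $\mathrm{vec}$ and the permutation matrix $\Pi$ (depending only on the dimensions) such that $X\otimes Y = \Pi\,(\mathrm{vec}(X)\mathrm{vec}(Y)^\top)\,\Pi^\top$ in the appropriate sense, so that $\|X\otimes Y\|$ and $\|\mathrm{vec}(X)\mathrm{vec}(Y)^\top\|$ differ by at most a dimensional constant in each direction. Writing $G_{XY} = \mE(\mathrm{vec}(X)\mathrm{vec}(Y)^\top)$, the matrices $G_{XX}$ and $G_{YY}$ are PSD, and for any unit vectors $a,b$ we have $a^\top G_{XY} b = \mE\big(\langle a,\mathrm{vec}(X)\rangle\,\langle b,\mathrm{vec}(Y)\rangle\big) \le \sqrt{\mE\langle a,\mathrm{vec}(X)\rangle^2}\cdot\sqrt{\mE\langle b,\mathrm{vec}(Y)\rangle^2} = \sqrt{a^\top G_{XX} a}\cdot\sqrt{b^\top G_{YY} b} \le \sqrt{\|G_{XX}\|}\cdot\sqrt{\|G_{YY}\|}$ by the scalar Cauchy--Schwartz inequality applied pointwise. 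Taking the supremum over $a,b$ gives the operator-norm bound $\|G_{XY}\|\le \sqrt{\|G_{XX}\|}\,\sqrt{\|G_{YY}\|}$; translating back through $\Pi$ absorbs the dimensional constant $C$ and yields the first displayed inequality.

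For the spectral-radius version with $X,Y$ square, I would use the elementary fact $\rho(S)\le \rho(S')$ whenever $0\le S\le S'$ entrywise is \emph{not} quite what is needed; instead the right tool is: for any two PSD matrices and a cross term, $\rho\!\big(\mE(X\otimes Y)\big)$ can be controlled because the relevant matrices share the structure of a $2\times 2$ block PSD matrix $\begin{pmatrix} \mE(X\otimes X) & \mE(X\otimes Y)\\ \mE(Y\otimes X) & \mE(Y\otimes Y)\end{pmatrix} = \mE\!\left[\begin{pmatrix}\mathrm{vec}(X)\\ \mathrm{vec}(Y)\end{pmatrix}\begin{pmatrix}\mathrm{vec}(X)\\ \mathrm{vec}(Y)\end{pmatrix}^{\!\top}\right]\succeq 0$, after the same permutation bookkeeping. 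From block-PSD-ness one gets $\rho(\mE(X\otimes Y))\le \rho(\mE(X\otimes X))^{1/2}\rho(\mE(Y\otimes Y))^{1/2}$ by a standard argument (e.g. bounding the Rayleigh-type quotient for $\mE(X\otimes Y)$, or invoking that the geometric mean of the diagonal blocks dominates the off-diagonal block in PSD order, $\mE(X\otimes Y) \preceq$ (up to congruence) the geometric mean, whose spectral radius is the geometric mean of the two spectral radii because in this case the two blocks are simultaneously triangularizable through their common Perron structure). I would phrase it via: normalize so $\rho(\mE(X\otimes X)) = \rho(\mE(Y\otimes Y)) = 1$ and show $\rho(\mE(X\otimes Y))\le 1$, which follows by testing against the Perron eigenvectors and using Cauchy--Schwartz as above. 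The main obstacle I anticipate is purely notational: pinning down the permutation $\Pi$ relating $X\otimes Y$ to $\mathrm{vec}(X)\mathrm{vec}(Y)^\top$ and making sure the ``no dimensional constant'' claim in the spectral-radius inequality is genuinely clean — one must check that the spectrum (not just the norm) of $\mE(X\otimes Y)$ is unchanged by the bookkeeping, which it is since $\Pi$ acts by conjugation, but it deserves an explicit sentence rather than being swept under the rug.
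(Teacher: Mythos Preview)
Your first inequality is essentially fine and close in spirit to the paper: both arguments bottom out in the scalar Cauchy--Schwarz inequality applied to linear (or bilinear) functionals of $X$ and $Y$. The paper tests with rank-one functionals $a_x^\top X b_x$ and $a_y^\top Y b_y$, defining an ad hoc norm on the tensor space; you test with arbitrary linear functionals $\langle a,\mathrm{vec}(X)\rangle$ and $\langle b,\mathrm{vec}(Y)\rangle$, which gives the operator norm of $G_{XY}=\mE(\mathrm{vec}(X)\mathrm{vec}(Y)^\top)$. Either way a dimensional constant appears when you convert back to a generic norm, so the first display goes through.

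The spectral-radius part, however, has a genuine gap, and it is precisely the point you flagged as ``deserving an explicit sentence.'' The relation between $X\otimes Y$ and $\mathrm{vec}(X)\mathrm{vec}(Y)^\top$ is \emph{not} a conjugation by a permutation matrix; it is a reshape of a four-way array (indices $(i,k)\times(j,l)$ versus $(i,j)\times(k,l)$). Concretely, take $X=Y=\big(\begin{smallmatrix}0&1\\0&0\end{smallmatrix}\big)$ deterministically: then $X\otimes X$ is nilpotent, so $\rho(\mE(X\otimes X))=0$, while $G_{XX}=\mathrm{vec}(X)\mathrm{vec}(X)^\top$ is a rank-one projector with $\rho(G_{XX})=1$. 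So the spectra do not match, and none of your proposed workarounds (block-PSD geometric mean, Perron eigenvectors) rescues this---the matrices here are not nonnegative, and PSD block structure controls operator norms, not spectral radii of off-diagonal blocks.

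The idea you are missing is the one the paper uses: once you have the norm inequality with a constant $C$, apply it not to $(X,Y)$ but to the products $(X_1\cdots X_n,\,Y_1\cdots Y_n)$ of $n$ independent copies. Independence gives
\[
\mE\big((X_1\cdots X_n)\otimes(Y_1\cdots Y_n)\big)=\big(\mE(X\otimes Y)\big)^n,
\]
and similarly on the right-hand side, so the norm inequality reads
\[
\big\|(\mE(X\otimes Y))^n\big\|\le C\,\big\|(\mE(X\otimes X))^n\big\|^{1/2}\big\|(\mE(Y\otimes Y))^n\big\|^{1/2}.
\]
Taking $n$th roots and letting $n\to\infty$, Gelfand's formula turns each norm into the corresponding spectral radius and kills the constant $C$. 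This is the clean route to the second display; your reshape/PSD line of attack does not reach it.
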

\begin{proof}
We prove the claim with $C=1$ for a special choice of norms, then the claim follows by the equivalence of norms.
The construction is indirect at first, we handle an element $Z$ expressed as a mixture of tensor products by some auxiliary measure $\mu$ on some auxiliary set $S$, with proper choice of $X,Y$, i.e., assume $Z=\int_S X\otimes Y d\mu$.
Define the norm of such a mixture as
$$
\left\|\int_S X\otimes Y d\mu \right\|_* = \sup_{\|a_x\| = \|b_x\|=\atop \|a_y\| = \|b_y\| = 1}\left\{ \int_S a^\top_x X b_x b_y^\top Y^\top a_y d\mu \right\},
$$
where $a_x,b_x,a_y,b_y$ are vectors of appropriate dimensions. This is a general scheme for all tensor product spaces encountered.

First we need to check this is a norm indeed. It is defined: all big matrices can be expressed as mixtures of tensor products. It is well defined: observe that for any $a_x,b_x,a_y,b_y$ the integral depends only on the value, not the representation, then taking supremum keeps this property. Linearity is straightforward from the definition. For the triangle inequality we use the freedom of representation, w.l.o.g we may express an addition $Z=Z_1+Z_2$ as merging disjoint representations, i.e., take $S=S_1\cup S_2$ with $S_1\cap S_2 = \emptyset$ with $X,Y,\mu$ merging the action of $X_i,Y_i,\mu_i$. Then by construction 
$$
\int_S X\otimes Y d\mu = \int_{S_1} X_1\otimes Y_1 d\mu_{1} + \int_{S_2} X_2\otimes Y_2 d\mu_{2}.
$$
Similarly, towards computing the norm we have
\begin{align*}
  \int_S a^\top_x X b_x b_y^\top Y^\top a_y d\mu &= \int_{S_1} a^\top_x X_1 b_x b_y^\top Y_1^\top a_y d\mu_1\\
  &+ \int_{S_2} a^\top_x X_2 b_x b_y^\top Y_2^\top a_y d\mu_2.  
\end{align*}
Taking supremum in $a_x,b_x,a_y,b_y$ for the l.h.s.\ the norm appears there. These are not necessary the optimal parameter vectors for the r.h.s. expressions, immediately confirming the triangle inequality.

Let us turn to our main claim. Notice that the underlying probability space appears naturally for expressing the mixtures to handle. For fixed $a_x,b_x,a_y,b_y$, the claim is a standard ``Cauchy-Schwartz'' between the two random scalars $a^\top_x X b_x$ and $b_y^\top Y^\top a_y$, i.e.,
\begin{align*}
  \mE(a^\top_x X b_x b_y^\top Y^\top a_y) \le &\sqrt{\mE(a^\top_x X b_x b_x^\top X^\top a_x)}\\
  \cdot &\sqrt{\mE(a^\top_y Y b_y b_y^\top Y^\top a_y)}.  
\end{align*}
Optimizing in $a_x,b_x,a_y,b_y$ for the l.h.s. we get the inequality needed to conclude the first part on norms.
Considering the spectral radius, take $n$ independent copies of $X,Y$ and apply the claim for their products, noting 
$\mE((X_1\cdots X_n)\otimes (Y_1\cdots Y_n))=\mE (X\otimes Y)^n$
arriving at
$$
\| \mE (X\otimes Y)^n \|_* \le \sqrt{\| \mE (X\otimes X)^n \|_*} \cdot \sqrt{\| \mE (Y\otimes Y)^n \|_*}.
$$
Taking $n$th root and letting $n\to\infty$ 
the spectral radii appear as required.
\end{proof}

Now we can conveniently apply the above in our context:
\begin{lemma}
  \label{lm:eta_conv}
  In the setting of Assumption \ref{assumption:typicalsetup}, $\eta_{2k}$ is (mid-point) convex in $k$.
  Also, ${\eta_{2k}} /{(2k)}$ is non-decreasing.
\end{lemma}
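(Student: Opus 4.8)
The plan is to deduce both claims directly from the Cauchy--Schwartz-type tensor inequality in Lemma~\ref{lm:cs-tensor}, using its spectral-radius form (the one with constant $1$). It is convenient to extend the notation and set $\eta_m := \log\rho(\mE(B_1^{\otimes m}))$ for every integer $m\ge 0$; in particular $\eta_0 = 0$, the $0$-th tensor power being the scalar $1$. First I would apply Lemma~\ref{lm:cs-tensor} with $X = B_1^{\otimes a}$ and $Y = B_1^{\otimes b}$ for arbitrary nonnegative integers $a,b$ (the lemma requires no independence between $X$ and $Y$, which is essential here since both are built from the single matrix $B_1$). Because $X\otimes Y = B_1^{\otimes(a+b)}$, $X\otimes X = B_1^{\otimes 2a}$ and $Y\otimes Y = B_1^{\otimes 2b}$, the lemma gives $\rho(\mE(B_1^{\otimes(a+b)})) \le \sqrt{\rho(\mE(B_1^{\otimes 2a}))}\,\sqrt{\rho(\mE(B_1^{\otimes 2b}))}$, and taking logarithms yields
\[
\eta_{a+b} \le \frac{\eta_{2a}+\eta_{2b}}{2}\qquad\text{for all integers } a,b\ge 0 .
\]

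Granting this, the midpoint-convexity claim is immediate: with $a=k_1$, $b=k_2$ it reads $\eta_{k_1+k_2}\le\frac12(\eta_{2k_1}+\eta_{2k_2})$, which for $k_1+k_2$ even is exactly midpoint convexity of $k\mapsto\eta_{2k}$; equivalently, taking $a=k-1$, $b=k+1$ gives $\eta_{2k}\le\frac12(\eta_{2(k-1)}+\eta_{2(k+1)})$ for every $k\ge 1$, i.e.\ the sequence $c_k:=\eta_{2k}$, $k\ge 0$, is convex. For the monotonicity of $\eta_{2k}/(2k)$ I would then combine this convexity with $c_0=\eta_0=0$: convexity means the increments $\delta_k:=c_k-c_{k-1}$ are non-decreasing, and $c_0=0$ gives $c_k=\sum_{j=1}^k\delta_j$; hence $\eta_{2k}/(2k)=\frac12\cdot\frac1k\sum_{j=1}^k\delta_j$ is one half of a running average of a non-decreasing sequence, and is therefore non-decreasing in $k$ (explicitly $\frac1{k+1}\sum_{j=1}^{k+1}\delta_j-\frac1k\sum_{j=1}^k\delta_j=\frac1{k(k+1)}\sum_{j=1}^k(\delta_{k+1}-\delta_j)\ge 0$).

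I do not expect a real obstacle once Lemma~\ref{lm:cs-tensor} is in hand; the only non-routine step is the one above, recognizing that feeding the two \emph{different} tensor powers $B_1^{\otimes a}$, $B_1^{\otimes b}$ of the \emph{same} random matrix into the lemma produces a single scalar inequality linking $\eta$ at $a+b$, $2a$ and $2b$. Everything afterwards is a standard fact about convex sequences, the only bookkeeping point being the base value $\eta_0=0$, which is precisely what upgrades convexity of $(c_k)$ to monotonicity of $c_k/k$ (and hence of $\eta_{2k}/(2k)$).
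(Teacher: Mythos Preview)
Your proof is correct and follows essentially the same route as the paper: both apply Lemma~\ref{lm:cs-tensor} with $X=B_1^{\otimes (k-1)}$, $Y=B_1^{\otimes (k+1)}$ to obtain midpoint convexity of $(\eta_{2k})_{k\ge 0}$, extend to $\eta_0=0$ (the ``identity matrix'' case), and then read off the monotonicity of $\eta_{2k}/(2k)$ from convexity together with $c_0=0$. Your write-up is simply more explicit about the last step, spelling out the running-average-of-increments argument that the paper leaves as ``this easily implies''.
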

\begin{proof}
  By applying Lemma \ref{lm:cs-tensor} with $X=B^{\otimes k-1}$ and $Y=B^{\otimes k+1}$ for $k\ge 1$ we directly get
  $$
  \exp(\eta_{2k}) \le \exp(\eta_{2k-2}/2) \exp(\eta_{2k+2}/2),
  $$
  showing the convexity of $\eta_{2k}$. To complete the sequence, extend to $\eta_0=0$. Lemma \ref{lm:cs-tensor} still provides convexity at $k=1$, using the identity matrix when necessary, this easily implies that $\eta_{2k}/(2k)$ must be non-decreasing for $k\ge 1$.
\end{proof}

\section{Numerical results}
\label{sec:simul}

The main question is the sharpness of the upper bound on the exponential convergence rate obtained.
We do not launch the processes from a single initial $x_0$, but rather from $(I-J)$:
\begin{equation}
  \label{eq:simrate}
  \frac 1 n \log \left(\frac 1{\sqrt{p}} {\| \diag(w_n)^{-1}M_n(I-J)\|_F}\right),
\end{equation}
bounding the empirical rate of convergence of the process for the range of any starting vector spanned by the columns of $I-J$: those with 0 average.
We take $n=100000$, which is generous in view of the size of the network and the communication pattern to follow. The Julia computing platform is used to carry out the simulations \cite{Julia2017}, \cite{JuliaGraphs2021}.

For the underlying network, we consider a model based on Random Geometric Graphs (RGG) \cite{penrose2003random}
with a simple perturbation where a dependence on the positions of the agents can be introduced.
We interpolate between a grid and uniform random placement.
To be more precise, let $c\in [0,1]$ be an interpolation parameter together with a reference number of nodes, $p_0$. For each node $i$, two preliminary positions are assigned: $z^i_g$ a unique point on the $\sqrt{p_0}\times\sqrt{p_0}$ square grid fitted into $[0,1]^2$ and $z^i_r$, a uniform random position in $[0,1]^2$. The final position is then declared as $z^i=cz^i_r+(1-c)z^i_g$.

We still need to define the graph on the points obtained on the unit square. We still want to stay with the concept of connecting those that are close.

As the structure of positions are changing, the clear connectivity thresholds for RGGs \cite{gupta1999RGG} does not apply anymore. Instead to get a graph with balanced density, we optimize the threshold for the distance of two nodes getting connected
so that the largest connected component contains $\approx 90\%$ of the nodes. Then this giant component is kept for further work also determining the final dimension $p$.
In Figure \ref{fig:gridrrg_graph} we see two examples for $c=0.1$ and $c=0.8$ for $p_0=64$ initial points, which will be the default size parameter for our simulations. This leads to a typical dimension of $p\approx 58$.
\begin{figure}[h]
  \centering
  %\hspace{0.03\textwidth}
  \subfloat[ ]{
    \includegraphics[width=0.18\textwidth]{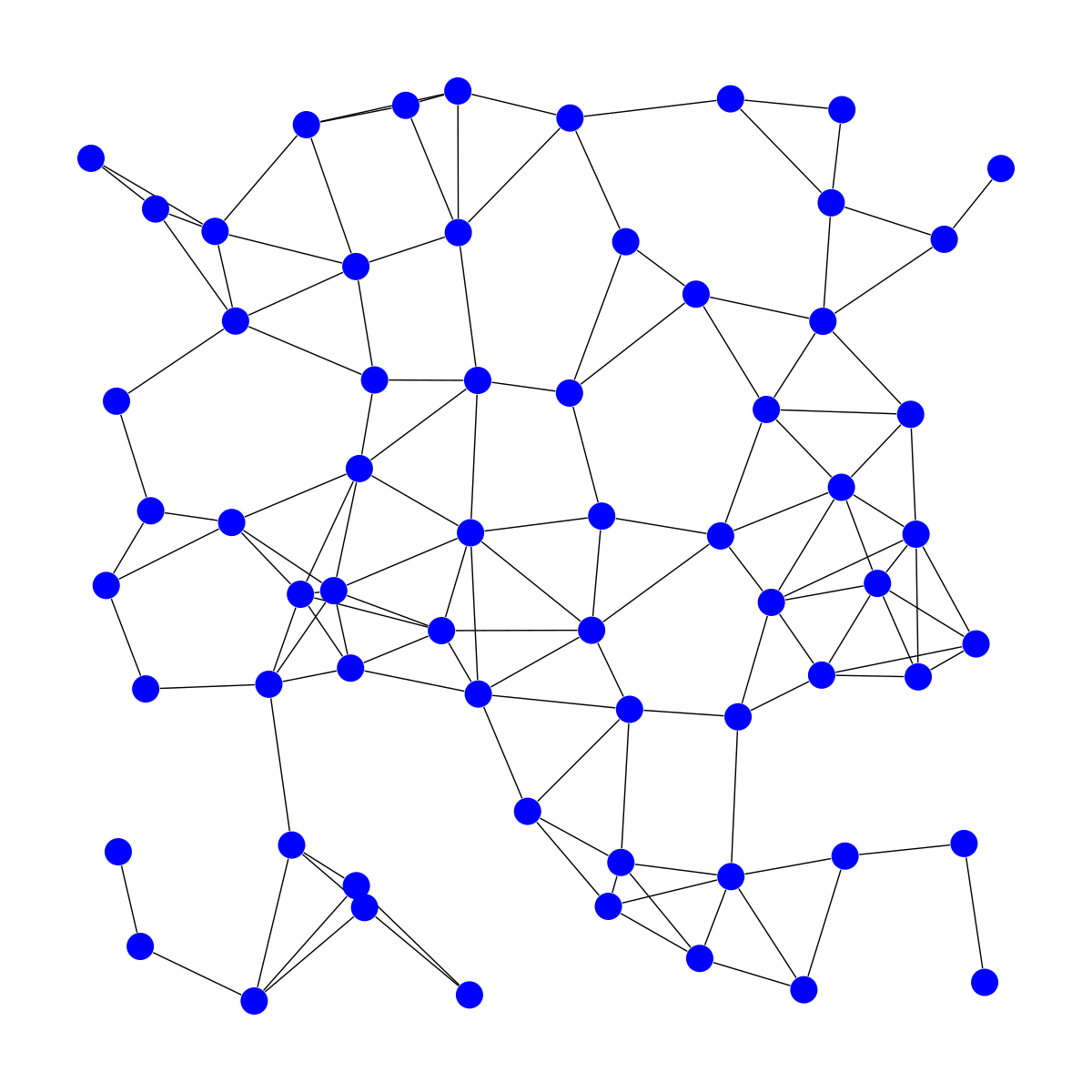}
  }
  %\hspace{0.03\textwidth}
  \subfloat[ ]{
    \includegraphics[width=0.18\textwidth]{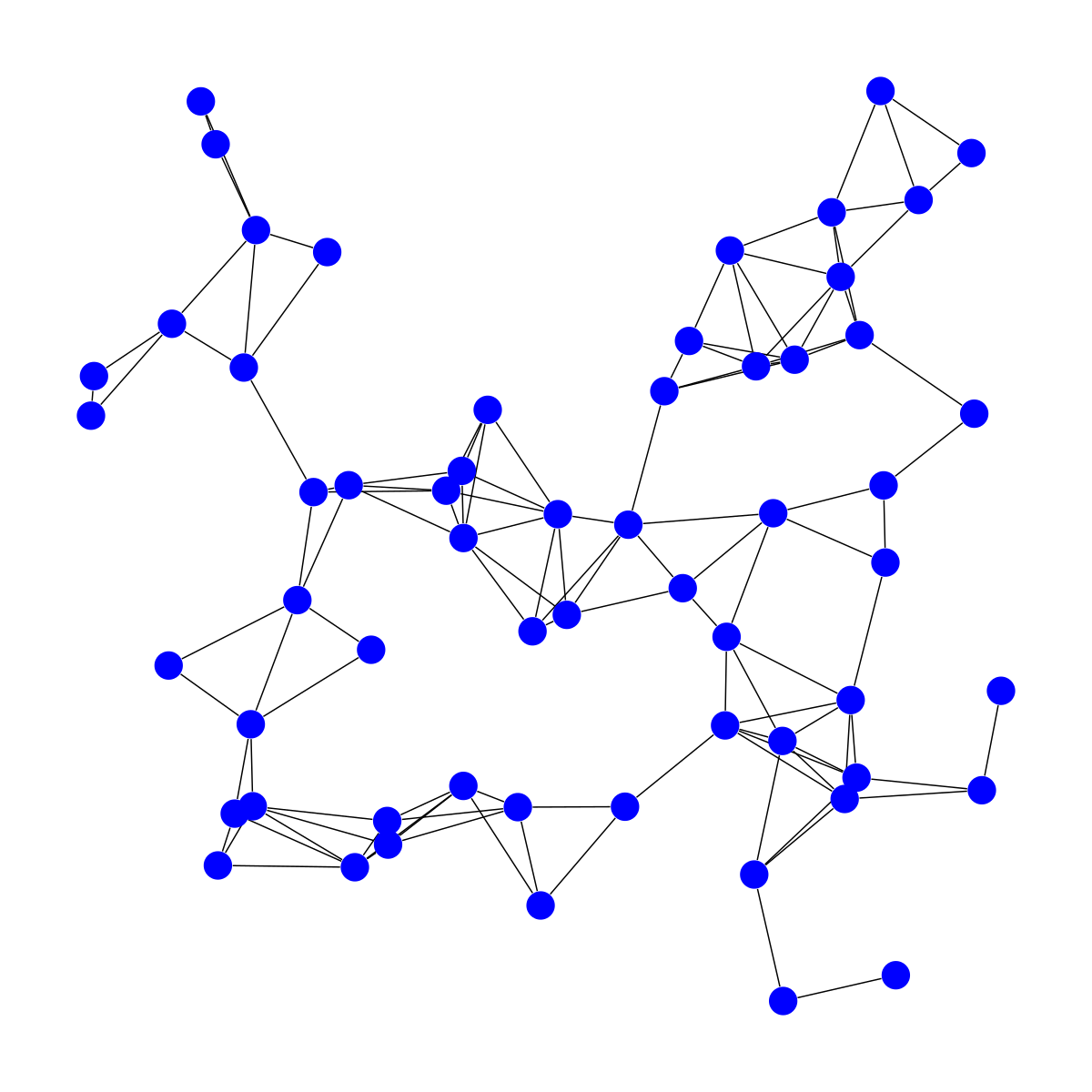}
  }
  \caption{Example random graphs interpolating between square grid and uniform random node placement, for (a) $c=0.1$ and (b) $c=0.8$ coefficient for the random component.}
  \label{fig:gridrrg_graph}
\end{figure}

The reference dynamics is asynchronous directed gossip: every step a uniformly chosen node communicates towards a single uniformly chosen neighbor, sending $\frac 1 2$ fraction of its value and weight. Figure \ref{fig:grid_gossip} presents the empirical rate according to \eqref{eq:simrate} together with $\eta_2/2$ for 500 simulations for various $c$, we see the two move together despite the wild randomness of the graph instances. We also see the difference of the two, showing that it is reliable estimate even point-wise.
The numerical stability is demonstrated by a single instance out of the 500 when there is a positive difference of $\approx 2\cdot 10^{-6}$.
\begin{figure}[h]
  \centering
    \includegraphics[width=0.50\textwidth]{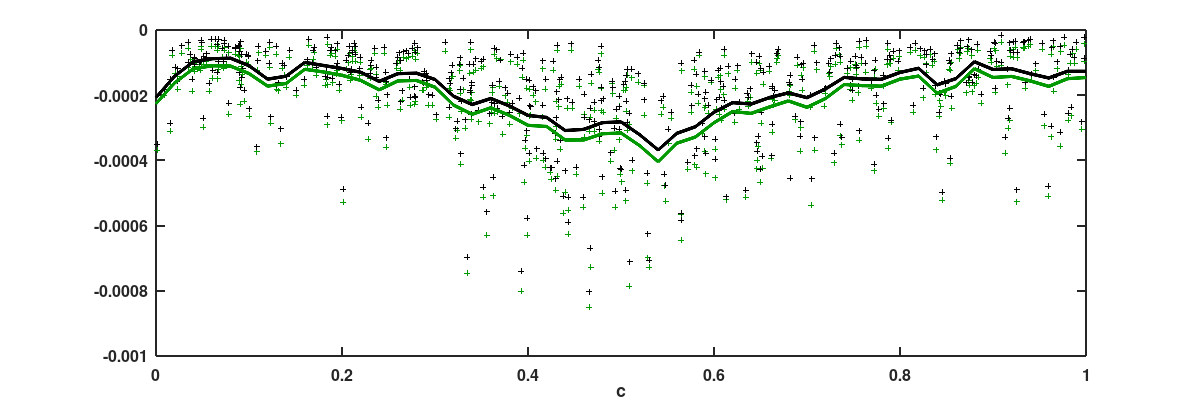}
    \includegraphics[width=0.50\textwidth]{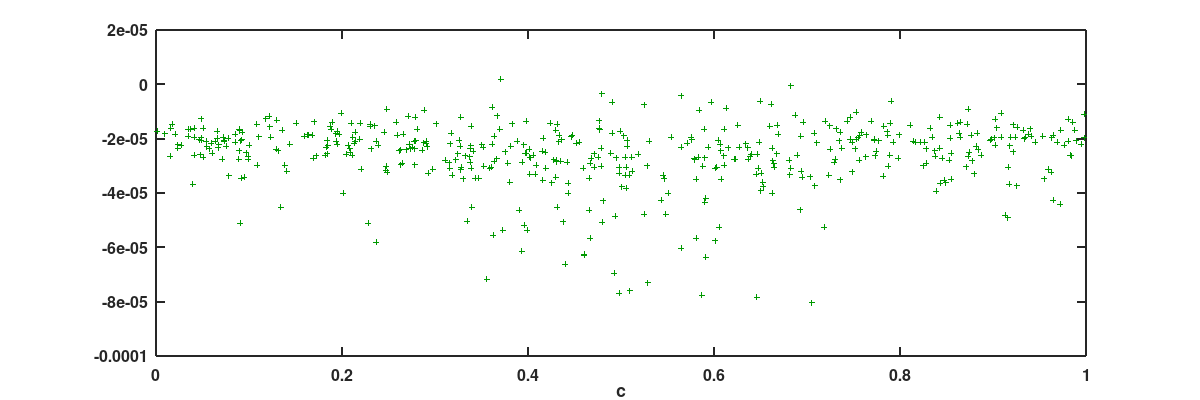}
    \caption{Empirical (green) rates versus bounding $\eta_2/2$ (black) and their moving averages, below the difference of the two.}
  \label{fig:grid_gossip}
\end{figure}

We compare the reference gossip with two modified strategies. First, we have two-way randomized gossip: the transmitting node selects two receivers and uniformly randomly splits the total fraction of $\frac 1 2$ to be sent between them. Note that the conditions of Theorem \ref{thm:Main}, Assumption \ref{assumption:typicalsetup} still holds, here $\cA$ is a union of segments in the space of non-negative matrices. Second, the reference gossip is modified to send only a fraction of $\frac 1 4$ to a single recipient, but we allow twice as many steps to take place, we will name this slowed gossip for convenience. Figure \ref{fig:grid_comparison} presents the comparison of rates of the modified strategies.
For the empirical rates we observe no consistent ordering of the three strategy, however, the difference is an order of magnitude smaller than the variance caused by by the graph variability, see the range in Figure \ref{fig:grid_gossip}. $\eta_2/2$ provides two-way gossip a stronger bound than for the reference process, even more for the slowed gossip.
\begin{figure}[h]
  \centering
    \includegraphics[width=0.50\textwidth]{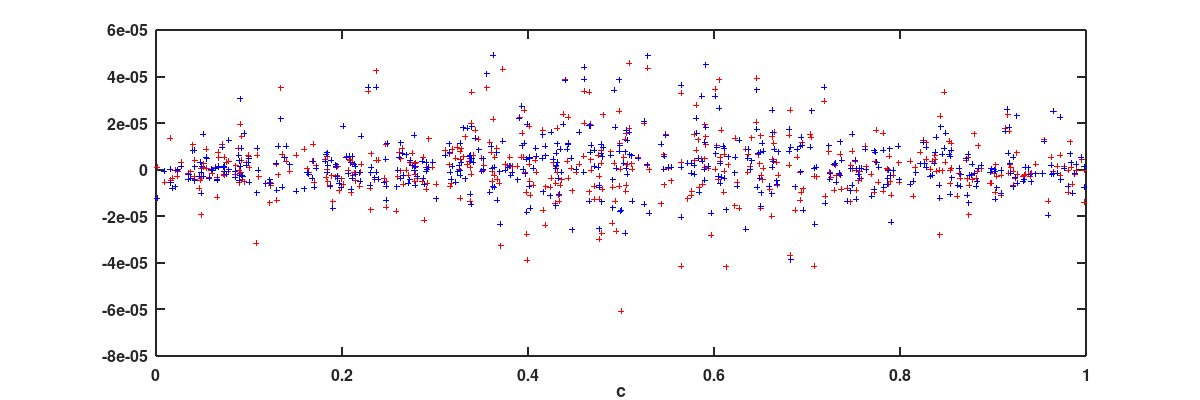}
    \includegraphics[width=0.50\textwidth]{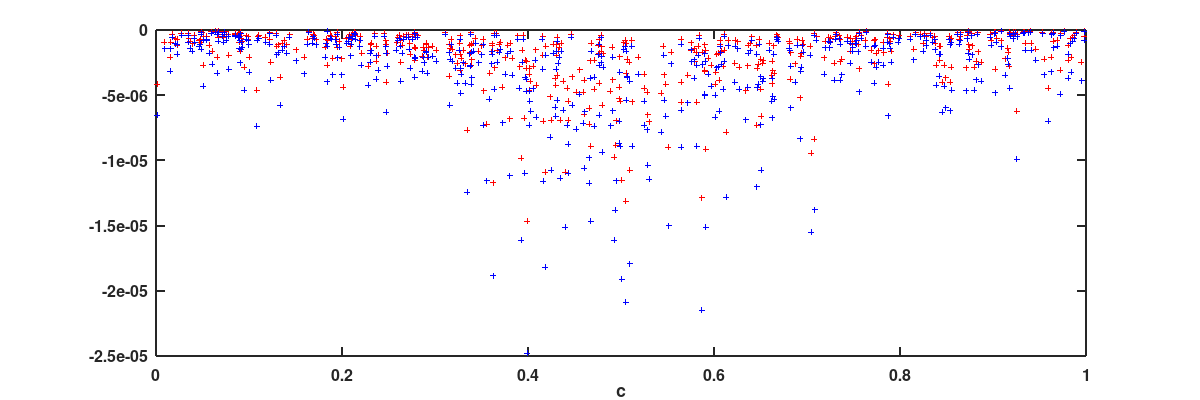}
    \caption{Empirical rates (above) and $\eta_2/2$ (below) compared to that of the reference gossip on structured RGGs, for two-way randomized gossip (red) and slowed gossip (blue).}
  \label{fig:grid_comparison}
\end{figure}

Another natural question to ask is the dependence of the rate on the connection structure.
We consider the following Erdős-Rényi process inspired model to study this phenomenon: starting with a cycle on $p=50$ nodes we add random edges uniformly one by one, up to $500$ (an average extra degree of $20$), and at each step, we evaluate both the empirical rates and $\eta_2/2$ of the three process variants. Figure \ref{fig:cycle_all} shows the aggregated picture. It is apparent that initially in the sparse region an extra edge is much more game-changing than later on. Also, in the late phase there is a clear ordering of efficiency of slowed gossip being the best, followed by two-way gossip, then the reference. Note however, this ordering is not fully present in the earlier sparse, less interconnected phases.

\begin{figure}[h]
  \centering
  \includegraphics[width=0.5\textwidth]{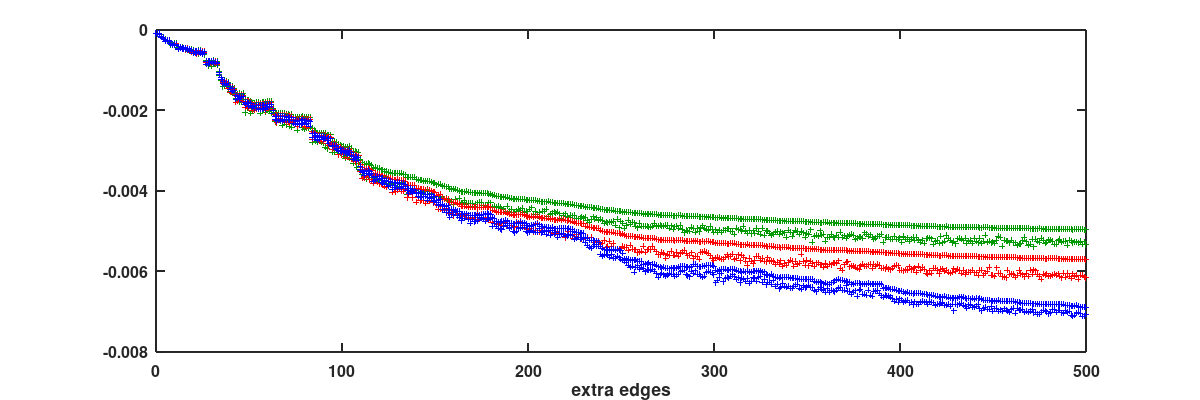}
  \caption{Empirical rates and $\eta_2/2$ for reference gossip (green) two-way randomized gossip (red) and slowed gossip (blue) on the cycle with random edges.}
  \label{fig:cycle_all}
\end{figure}

Let us zoom in once again for this connection structure to compare the three process variants. We compare separately the empirical rates and the computed $\eta_2/2$ for both process variants, normalized against the reference gossip in Figure \ref{fig:cycle_compare}. We observe that in terms of the empirical rate, initially there is a non-trivial race among the three strategies, then between the two-way and the slowed gossip, before the final order is settled for dense graphs. Interestingly, for the bounding $\eta_2/2$ the ordering is robust. 
\begin{figure}[h]
  \centering
  \includegraphics[width=0.5\textwidth]{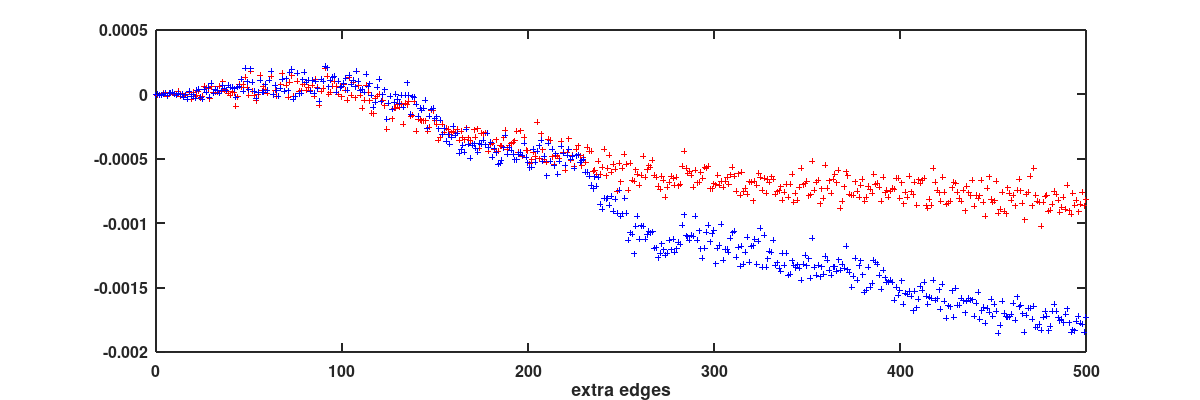}
  \includegraphics[width=0.5\textwidth]{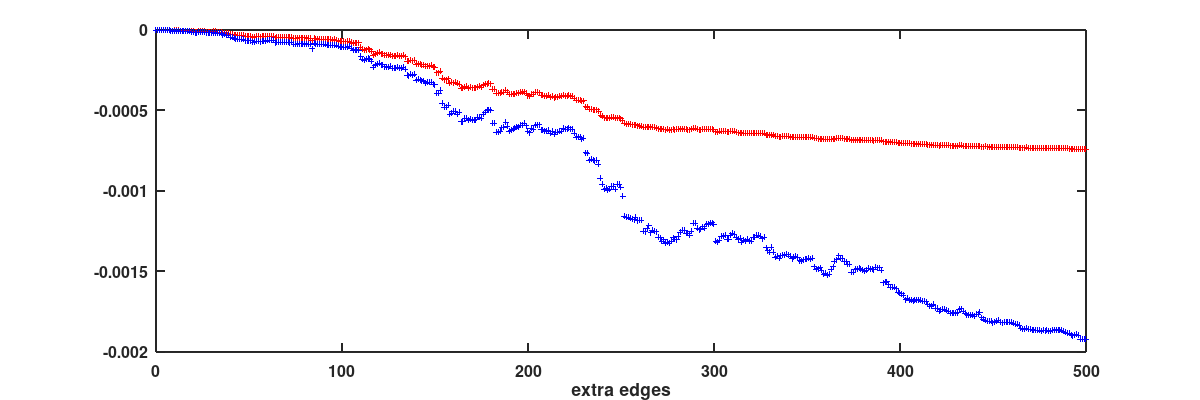}
  \caption{Empirical rates (above) and $\eta_2/2$ (below) compared to that of the reference gossip on the cycle with random edges, for two-way randomized gossip (red) and slowed gossip (blue).}
  \label{fig:cycle_compare}
\end{figure}

\section{Conclusion}
\label{sec:concl}

We have proven upper bounds for the almost sure exponential convergence rate of i.i.d.\ ratio consensus algorithms
inspired by the approach of \cite{iutzeler2013analysis} and by the analysis in \cite{gerencsr2019tight}.
The quantity $\eta_{2k} = \log\rho\left(\mE(A_{1}^{\otimes 2k})(I-J)^{\otimes 2k}\right)$ is indeed accessible, as it is based on the spectral description of a finite transformation of the matrix distribution describing the updates. 
We have shown that $\eta_{2k}/(2k)$ is non-decreasing, thus for bounding the convergence rate it is optimal to keep $k=1$.
However, our general results can be applied to provide upper bounds on the convergence rate of higher moments for linear consensus.

Through numerical examples we have observed that the bounds tend to capture well the magnitude of the rate, with an error of lower order. Also, for sparse networks few additional edges can improve efficiency significantly.

\section{Appendix}

\begin{proof}[of Lemma \ref{lem:SEQ_PRIM_IID}] 
	Define $\gamma:[0,\infty) \to \{0,1\}$ simply as the indicator of being positive. This naturally extends to matrices element-wise.
	
	The primitivity of $\mathcal A$ is characterized by the primitivity of $\gamma(\mathcal A)$ as only positivity is needed through the process, without focus on the actual value, and we are working with non-negative matrices.
	
	To investigate the support of the push-forward measure $\gamma_* \mu$, choose any $A\in\mathcal A$ and its projection $A^0=\gamma(A)$. Let $\eps = \min_{i,j}\{A^{i,j}\mid A^{i,j}>0\}$. For the small ball $B(A,\eps/2)$ we must have $\mu(B(A,\eps/2))>0$ as $A$ is in the support by assumption. Observe there are two type of matrices in the ball: $\cB_1$ with some with negative elements, not playing a role, and $\cB_2$ with non-negative matrices where positive elements appear at least where $A$ has them.
	Therefore $\mu(\cB_1)=0,\mu(\cB_2)>0$, and for any matrix of $B\in\cB_2$ we have $\gamma(B)\ge A^0$. This means $\gamma(\cB_2)$ maps to a set with positive $\gamma_* \mu$ probability, with all matrices bounded below by $A^0$.
	Consequently, the support of $\gamma(\mathcal{A})$ is majorated by the support of $\gamma_* \mu$, in the sense that for any matrix $A^0\in \gamma(\mathcal{A})$ there exists $B^0\in \supp(\gamma_* \mu)$ such that $A^0\le B^0$. Note that the two supports are not necessary equal.
	
	Consider now any sequence $A_{k_l}\cdots A_{k_1}>0$ with $A_{k_i}\in\mathcal{A}$ presenting the primitivity of $\mathcal{A}$. We know it is equivalent to $\gamma(A_{k_l})\cdots\gamma(A_{k_1})>0$. By the previous argument, we have matrices $\gamma(A_{k_i})\le B_{i}^0\in \supp(\gamma_* \mu)$, thus $B_{l}^0\cdots B_{1}^0>0$. Being in the support on a discrete space means $B_{i}^0$ has positive probability to appear, similarly for the chosen product at any $l$ consecutive steps, thus it will eventually occur (as we have an i.i.d.\ process), confirming sequential primitivity. In the meantime we rely on the matrices being allowable so that it is sufficient to find a positive product at an arbitrary starting time.
\end{proof}

\begin{proof}[of Lemma \ref{lem:EA_1_Irreducible}]
	We prove by contradiction, let us assume $\mE A_{1}$ is reducible.
	Without the loss of generality we can assume that $\mE A_{1} $ has the block structure $\big(\begin{smallmatrix}
	B & C\\
	0 & D
	\end{smallmatrix}\big)$
	with square blocks in the diagonal. Knowing that $A_1$ is non-negative, this would force $A_1$ to have the same structure a.s., then by stationarity for all $A_n$, and then for their products of any length, contradicting sequential primitivity.
	Thus $\mE A_{1} $ indeed must be irreducible. 
\end{proof}

\begin{proof}[of Lemma \ref{lem:w_subexp1}]
  The claim is a direct consequence of a Lemma 44 in \cite{gerencsr2019tight}, the validity of its conditions has been verified in Section \ref{sec:tightbounds}.
  The cited lemma states that the product $M_{n} = A_{n}A_{n-1}\cdots A_1$ is asymptotically rank-1, more specifically for any fixed pair of rows $i,j$ and any column $k$ the ratio ${M_{n}^{ik}}/{M_{n}^{jk}}$ is sub-exponential.
 
\medno 
As $w_n=M_{n}\b1$, each $w_n^i/w_n^j$ is easily seen to be a convex combination of the corresponding quotients ${M_{n}^{ik}}/{M_{n}^{jk}}, \, k = 1, \ldots, p.$ Thus  $w_n^i/w_n^j$ is also sub-exponential. Recall that $A_m$ is column stochastic for all $m$, and hence $M_{n}$ is also  column stochastic for all $n$. Thus we have $\m1^\top w_n = p$ for all $n$. Summation of $w_n^i/w_n^j$ through $i,$ with $j$  fixed, yields $p/w_n^j.$ Since each term is sub-exponential, it follows that $p/w_n^j,$ and its maximum over $j,$ is sub-exponential as well.
\end{proof}

\bibliographystyle{ieeetr}
\bibliography{pushsum,current,ringmixing}

\end{document}